\newtheorem{thm}{Theorem}[section]
\newtheorem{lem}[thm]{Lemma}
\newtheorem{prop}[thm]{Proposition}
\theoremstyle{definition}
\newtheorem{defn}[thm]{Definition}
\newtheorem{hypo}{Hypothesis}
\theoremstyle{remark}
\newtheorem{rem}[thm]{\bf Remark}
\numberwithin{equation}{section}
\newcommand{\eps}{\varepsilon}
\newcommand{\To}{\longrightarrow}
\newcommand{\KK}{\mathcal{K}}
\newcommand{\TT}{\mathcal{T}}
\newcommand{\co}{\mathrm{co}}
\newcommand{\op}{\mathrm{op}}
\newcommand{\upH}{\mathbf{H}}
\newcommand{\Hom}{\operatorname{Hom}}
\newcommand{\Ext}{\operatorname{Ext}}
\newcommand{\upHH}{\mathbf{HH}}
\newcommand{\Ima}{\operatorname{Im}}
\newcommand{\tint}{\textstyle{\int}}
\begin{document}
\title[Cup products on Hochschild cohomology of Hopf--Galois extensions]{Cup products on Hochschild cohomology of Hopf--Galois extensions}

\author[Liyu Liu]{Liyu Liu}
\address{School of Mathematical Sciences, Yangzhou University, No.\ 180 Siwangting Road, Yangzhou 225002, Jiangsu, PR China}
\email{lyliu@yzu.edu.cn}

\author[Wei Ren]{Wei Ren}
\address{School of Mathematical Sciences, Chongqing Normal University, Chongqing 401331, PR China}
\email{wren@cqnu.edu.cn}

\author[Shengqiang Wang]{Shengqiang Wang}
\address{School of Mathematics, East China University of Science and Technology, Shanghai 200237, PR China}
\email{sqwang@ecust.edu.cn}

\thanks{}
\subjclass[2020]{16E40, 16S40}

\keywords{Hochschild cohomology, Hopf algebra, Hopf--Galois extension, smash product}

\begin{abstract}
In this paper, we give an explicit chain map, which induces the algebra isomorphism between the Hochschild cohomology  $\upHH^{\bullet}(B)$ and the $H$-invariant subalgebra $\upH^{\bullet}(A, B)^{H}$ under two mild hypotheses, where $H$ is a finite dimensional semisimple Hopf algebra and $B$ is an $H$-Galois extension of $A$. In particular, the smash product $B=A\#H$ always satisfies the mild hypotheses. The isomorphism between $\upHH^{\bullet}(A\#H)$ and  $\upH^{\bullet}(A, A\#H)^{H}$ generalizes the classical result of group actions. As an application, Hochschild cohomology and cup product of the smash product of the quantum $(-1)$-plane and Kac--Paljutkin Hopf algebra are computed.
\end{abstract}

\maketitle

\dedicatory{}
\commby{}

\section{Introduction}

Let $\Bbbk$ be a field and $A$ be a $\Bbbk$-algebra. The Hochschild cohomology $\upH^{\bullet}(A, N)$ of $A$ with coefficients in an $A$-bimodule $N$ was introduced in \cite{Hoc45} in order to classify, up to equivalence, all extensions of $A$ with kernel $N$. In particular, $\upHH^\bullet(A):=\upH^\bullet(A,A)$ admits an additional structure found by Gerstenhaber \cite{Ger63}, under which we now say that Hochschild cohomology is a Gerstenhaber algebra. Roughly speaking, a Gerstenhaber algebra is an $\mathbb{N}$-graded vector space equipped with two binary operations, cup product and Gerstenhaber bracket, which satisfy several axioms. The Gerstenhaber algebra structure on $\upHH^\bullet(A)$ was essentially based on the bar complex of $A$ in \cite{Ger63}. The bar complex, as an $A^e$-projective resolution of $A$, is very big, so in practice one seldom computes the structure via the bar complex directly. Later work invokes many other projective resolutions, depending on the setting.

Let $A$ be an algebra upon which a finite group $G$ acts by automorphisms, and $A\#G$ denote the resulting skew group algebra. In the commutative case, for polynomial algebra $A$, Farinati \cite{Far05}, Ginzburg and Kaledin \cite{GK04} proved independently that $\upHH^{\bullet}(A\#G)\cong\upH^{\bullet}(A, A\#G)^{G}$ as vector spaces, where the supscript $G$ denotes $G$-invariants. Based on this, Shepler and Witherspoon investigated the cup product structure on $\upHH^\bullet (A\#G)$ in \cite{SWi11}. Later, Negron and Witherspoon also described the Gerstenhaber brackets on $\upHH^\bullet (A\#G)$ in \cite[Theorem 5.2.3]{NW17}.

As for the noncommutative algebra $A$ case, Witherspoon and Zhou proved the aforementioned Gerstenhaber algebra isomorphism for quantum symmetric algebras with diagonal group extensions \cite{WZ16}. Burciu and Witherspoon proved an isomorphism $\upH^\bullet(A\#H, -)\cong\Ext_{\Gamma}^\bullet(A,-)$ for smash products, where $H$ is a Hopf algebra and $\Gamma$ is a subalgebra of $(A\# H)^e$ \cite{BW07}. Negron gave a multiplicative spectral sequence to compute the cup product of $\upHH^\bullet (A\#H)$ \cite{Neg15}. We remark that the projective resolution constructed by Negron is in fact a $\Gamma$-module complex (ibid.). Wang succeeded in finding $\Gamma$ for $H$-Galois extensions $B$ over $A$ \cite{Wang18} and obtained a generalized isomorphism $\upH^\bullet(B, -)\cong\Ext_{\Gamma}^\bullet(A,-)$. Witherspoon and her collaborators also presented techniques for computing Gerstenhaber brackets on Hochschild cohomology of general twisted tensor product algebras. These techniques involve twisted tensor product resolutions and are based on Gerstenhaber brackets expressed via arbitrary bimodule resolutions \cite{KMO21}. Furthermore, Briggs and Witherspoon studied the Hochschild cohomology of twisted tensor products in \cite{BW22}, and proved the Gerstenhaber algebra isomorphism for any finite dimensional algebra $A$ with an action of finite abelian group $G$. Besides, many researchers computed lots of examples on this topic.

It is worth noting that Stefan's homological and cohomological spectral sequences are powerful tools for investigating homology and cohomology of $H$-Galois extensions \cite{Ste95}. The original proof is to employ universal $\delta$-functors and Grothendieck spectral sequences. In contrast to that, when restricted to smash product situation, the cohomological spectral sequence has been proved by double complex \cite{Neg15}. It implies $\upHH^{\bullet}(A\#H)\cong\upH^{\bullet}(A, A\#H)^{H}$ for finite dimensional semisimple Hopf algebras $H$. The isomorphism can be induced by appropriate (co)chain maps in different contexts (cf. \cite{Neg15}, \cite{SW12}, etc.). In this paper, we intend to unify and generalize the various cases by considering Hopf--Galois extensions, which include skew group algebras, smash products, crossed products and so on. We will reprove Stefan's spectral sequences using double complex. Consequently, under two mild hypotheses, there still exists a chain map on the level of complexes, which induces an isomorphism on the cohomologies.

\begin{thm}[Thm.\ \ref{thm:cup-product-semisimple}]\label{thm:main-1}
	Let $B/A$ be an $H$-Galois extension where $H$ is semisimple. Suppose that Hypotheses I and II given in Sec.\ \ref{sec:cup-semisimple} hold true. Then there is an isomorphism $\upHH^{\bullet}(B)\cong\upH^{\bullet}(A, B)^{H}$ that preserves cup products.
\end{thm}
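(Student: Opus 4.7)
The plan is to avoid invoking Stefan's spectral sequence abstractly and instead construct an explicit cochain map that realises the isomorphism. First, I would pick a $B^e$-projective resolution $P_\bullet \to B$ and a parallel $A$-$B$-bimodule resolution $Q_\bullet \to A$ carrying a compatible $H$-action inherited from the Hopf--Galois datum on $B$. The Galois bijection $B \otimes_A B \cong B \otimes H$ should allow one to relate the two by an induction/restriction construction; the role of Hypotheses I and II is presumably to guarantee the existence of such resolutions together with an explicit $H$-equivariant comparison morphism between them.

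Once the resolutions are fixed, the Hom complexes $\Hom_{B^e}(P_\bullet, B)$ and $\Hom_{A \otimes B^{\op}}(Q_\bullet, B)$ compute $\upHH^\bullet(B)$ and $\upH^\bullet(A, B)$ respectively, and the $H$-action on $Q_\bullet$ combined with the natural $H$-module structure on $B$ makes the latter an $H$-complex. Since $H$ is semisimple, taking $H$-invariants is exact and commutes with cohomology, so it suffices to construct a quasi-isomorphism $\Hom_{B^e}(P_\bullet, B) \to \Hom_{A \otimes B^{\op}}(Q_\bullet, B)^H$ at the level of complexes. This quasi-isomorphism would be built from the comparison maps above; the Hopf--Galois isomorphism is precisely what reduces a $B \otimes_A B$-variable to an $A$-bimodule variable tensored with a factor of $H$. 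At the same time, the double-complex proof of Stefan's spectral sequence promised in the introduction should collapse at $E_2$ by semisimplicity, giving the underlying vector-space isomorphism that our chain map must upgrade.

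The conceptually delicate step is verifying compatibility with cup products. Cup products in both $\upHH^\bullet(B)$ and $\upH^\bullet(A, B)$ are induced by diagonal approximations on the resolutions, not by the Hom complexes alone, so both $P_\bullet$ and $Q_\bullet$ must be endowed with diagonals that are coassociative up to homotopy, and the chain map above must be shown to intertwine them up to homotopy. The multiplicative structure on $\upH^\bullet(A, B)^H$ comes from the Hopf action respecting these cup products; semisimplicity again lets one identify the invariants with a direct summand at the cochain level, so that the compatibility can be checked there rather than only on the spectral-sequence level.

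I expect the main obstacle to be the explicit construction of a single chain map that is simultaneously $H$-equivariant and compatible with the chosen diagonals on both resolutions. Standard comparison arguments produce a chain map between projective resolutions up to chain homotopy, but to conclude an algebra isomorphism on cohomology one must either carry out a direct construction using bar-type resolutions twisted by the Hopf--Galois structure or appeal to a homological perturbation argument to transport a diagonal through the comparison. The two mild hypotheses in Section~\ref{sec:cup-semisimple} are expected to package precisely the conditions under which this explicit cochain construction goes through and descends to an $H$-equivariant map on cup-product level.
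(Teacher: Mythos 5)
There is a genuine gap: your plan correctly isolates the hard point (a single chain map compatible with the diagonal approximations on both sides) but then leaves it unresolved, deferring to ``a direct construction \dots or a homological perturbation argument.'' That step is precisely where the content of the theorem lies, and the paper's two hypotheses are formulated so as to dissolve it rather than to be overcome by a comparison-up-to-homotopy argument. Concretely, the missing ingredients are: (i) the subalgebra $\Gamma=B\square_H B^{\op}\subseteq B^e$, with Hypothesis I asking for a \emph{single} resolution $K$ of $A$ by left $\Gamma$-modules that are $A^e$-projective; (ii) a right $H$-action on $\Hom_{A^e}(K,N)$ built from the translation maps $\beta^{-1}(1\otimes h)=\sum_i l_i(h)\otimes_A r_i(h)$ of the Galois map, whose invariants are exactly $\Hom_{\Gamma}(K,N)$; (iii) the induced complex $\KK=B^e\otimes_{\Gamma}K$, which is automatically a $B^e$-projective resolution of $B$, so that
\[
\Theta\colon \Hom_{A^e}(K,N)^H=\Hom_{\Gamma}(K,N)\cong\Hom_{B^e}(\KK,N)
\]
is an \emph{isomorphism of complexes}, not merely a quasi-isomorphism between Hom-complexes of two independently chosen resolutions. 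Your two-resolution setup ($P_\bullet$ over $B^e$ and $Q_\bullet$ over $A$, related by a comparison morphism) reintroduces exactly the homotopy ambiguity that makes multiplicativity hard to check; also, $\upH^\bullet(A,B)$ is computed by $\Hom_{A^e}(Q_\bullet,B)$, not $\Hom_{A\otimes B^{\op}}(Q_\bullet,B)$ as written.

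Once $\KK$ is induced from $K$, Hypothesis II (a $\Gamma$-equivariant diagonal $T\colon K\to K\otimes_AK$) transports to a diagonal $\TT\colon\KK\to\KK\otimes_B\KK$ by the explicit formula $b\otimes b'\otimes_{\Gamma}k\mapsto (b\otimes 1\otimes_{\Gamma}k_{\langle1\rangle})\otimes_B(1\otimes b'\otimes_{\Gamma}k_{\langle2\rangle})$; the only real verification is that this is $\Gamma$-balanced, which uses the Galois identities. With that in hand, the compatibility of $\Theta^{-1}$ with cup products is a one-line computation on cochains, and semisimplicity of $H$ (exactness of $(-)^H$) identifies $\upH^\bullet(\Hom_{A^e}(K,B)^H)$ with $\upH^\bullet(A,B)^H$. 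So your overall strategy points in the right direction, but without the $\Gamma$-module formulation and the induced diagonal the proof does not close.
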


This makes computation of cup products available for concrete algebras, especially for smash products $A\# H$. Although Negron has given beautiful formulas in \cite{Neg15} to determine cup product structure on $\upHH^\bullet(A\#H)$, he also posed two additional conditions for $A^e$-projective resolutions of $A$ (different from ours). So we focus on smash products over Koszul algebras, and apply the isomorphism in Theorem \ref{thm:main-1}. In this case, we show that the Koszul complex of $A$ satisfies the two hypotheses and establish a relation between $\upHH^\bullet(A\#H)$ and the Koszul dual $A^!$.

\begin{thm}[Thm.\ \ref{thm:cup-koszul}]\label{thm:main-2}
	The complexes of right $H$-modules, $A^!\otimes (A\# H)$ and $\Hom_{A^e}(K(A), A\# H)$, are mutually isomorphic. Consequently, $\upHH^\bullet(A\# H)$ is isomorphic to the $H$-invariant subalgebra of $\upH^\bullet(A^!\otimes (A\# H))$ as a graded algebra.
\end{thm}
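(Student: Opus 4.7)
The plan is to prove the two assertions in sequence: first, to exhibit the natural isomorphism of complexes of right $H$-modules, and then to derive the graded algebra isomorphism by invoking Theorem \ref{thm:main-1}.

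For the complex isomorphism, I would start from the Koszul complex $K(A)$, whose term in degree $n$ is $K_n(A)=A\otimes (A^!_n)^{\ast}\otimes A$ with differentials built from the Koszul datum. Applying $\Hom_{A^e}(-, A\#H)$ term by term and using the adjunction $\Hom_{A^e}(A\otimes V\otimes A, M)\cong\Hom_{\Bbbk}(V, M)$, each term becomes $A^!_n\otimes (A\#H)$ after identifying $((A^!_n)^{\ast})^{\ast}$ with $A^!_n$ via finite dimensionality of the Koszul pieces. The right $H$-action on $A\#H$ (by right translation on the Hopf factor) is inherited on $\Hom_{A^e}(K(A), A\#H)$ through post-composition and matches the action on the second tensor factor of $A^!\otimes(A\#H)$. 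The main content of this step is to transport the Koszul differential across this identification and verify that the resulting map on $A^!\otimes(A\#H)$ is the expected convolution-type formula combining the Koszul comultiplication on $A^!$ with the $A$-bimodule structure of $A\#H$.

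For the second assertion, the Koszul resolution $K(A)\to A$ computes $\upH^{\bullet}(A, A\#H)\cong H^{\bullet}(\Hom_{A^e}(K(A), A\#H))$, which by the first part equals $\upH^{\bullet}(A^!\otimes(A\#H))$ as a graded right $H$-module. Since a smash product is always an $H$-Galois extension of $A$ satisfying the mild hypotheses (as remarked after Theorem \ref{thm:main-1}), I can apply Theorem \ref{thm:main-1} with $B=A\#H$ to obtain $\upHH^{\bullet}(A\#H)\cong\upH^{\bullet}(A, A\#H)^{H}$ as graded algebras, and then compose with the previous identification to land inside the $H$-invariant subalgebra of $\upH^{\bullet}(A^!\otimes(A\#H))$.

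The main obstacle I anticipate is verifying that the Koszul complex $K(A)$ itself meets Hypotheses I and II of Section \ref{sec:cup-semisimple} so that Theorem \ref{thm:main-1} can legitimately be realised through it; although smash products automatically satisfy these hypotheses at the algebra level, one still has to produce a specific $A^e$-projective resolution and a compatible chain map into the bar complex, which is where the combinatorics of Koszul duality really has to interact cleanly with the $H$-action coming from the module-algebra structure on $A$. A secondary technical check is confirming that the right $H$-action induced on $A^!\otimes(A\#H)$ is transported correctly through the identification, so that its $H$-invariants genuinely recover $\upHH^{\bullet}(A\#H)$ rather than an $H$-twisted variant, and that the cup product transported to $A^!\otimes(A\#H)$ is the one making both sides of the displayed isomorphism equal as graded algebras.
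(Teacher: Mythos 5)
Your overall architecture matches the paper's: identify $\Hom_{A^e}(K(A)_m, A\#H)$ with $A^!_m\otimes(A\#H)$ via finite-dimensionality of $(A^!_m)^*$, transport the differential, check Hypotheses I and II for $K(A)$, and feed the result into Theorem \ref{thm:main-1}. However, there is a genuine error in the step where you identify the right $H$-module structures, and it is not a ``secondary technical check'' --- it is the entire content of the first assertion. You take the $H$-action on $\Hom_{A^e}(K(A), A\#H)$ to be post-composition with right translation on the Hopf factor of $A\#H$, matching right translation on the second tensor factor of $A^!\otimes(A\#H)$. That is not the relevant action, and with that action the second assertion fails: the invariants of right translation on $A\#H$ do not compute $\upHH^\bullet(A\#H)$. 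The action that must be used is the one from Proposition \ref{prop:module-structure}, which in the smash-product case reads
\[
(f\leftharpoonup h)(m)=Sh_{(1)}\, f\bigl(h_{(2)}\cdot m\cdot Sh_{(3)}\bigr)\, h_{(4)},
\]
i.e.\ it acts on the argument through the $\Gamma$-module structure \eqref{eq:koszul-gamma-module} on $K(A)$ and conjugates the value. It is precisely for this action that $\Hom_{A^e}(K(A), A\#H)^H=\Hom_\Gamma(K(A), A\#H)$ computes $\upHH^\bullet(A\#H)$ (Proposition \ref{prop:invariant} and Theorem \ref{thm:isomorphism-semisimple}). Under the identification $\Phi$ this corresponds on $A^!\otimes(A\#H)$ to $(\xi\otimes b)\blacktriangleleft h=\xi\triangleleft h_{(2)}\otimes Sh_{(1)}bh_{(3)}$: nontrivial on the $A^!$ factor (via the dual of the $H$-action on $V$) and adjoint-type on $A\#H$, not right translation. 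The computation verifying $\Phi(\xi\otimes b)\leftharpoonup h=\Phi((\xi\otimes b)\blacktriangleleft h)$ is the actual proof of the first assertion in the paper, and nothing in your proposal replaces it.

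Two smaller points. The verification of Hypotheses I and II for $K(A)$, which you flag as the main obstacle, is done in the paper by making $K(A)_m$ a $\Gamma$-module via \eqref{eq:koszul-gamma-module} (using that $(A^!_m)^*$ is an $H$-submodule of $V^{\otimes m}$) and by restricting the standard diagonal $T$ of the bar resolution to the subcomplex $K(A)\subseteq B(A)$ given by \eqref{eq:koszul-embedding}; this is routine but does need to be said, and it happens before the theorem rather than inside its proof. Also, the multiplicativity of $\Phi$ --- hence that the transported product on $A^!\otimes(A\#H)$ really is the cup product --- is settled separately in Theorem \ref{thm:hochschild-cohomology-1}, which the paper's proof of this theorem simply cites; your proposal leaves this as an unexecuted check as well.
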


As a generalization of group actions on polynomial algebras, Hopf algebra actions on Artin--Schelter regular algebras are attractive in the field of noncommutative algebraic geometry. We consider an action of Kac--Paljutkin Hopf algebra on the quantum $(-1)$-plane, and compute the Hochschild cohomology of the resulting smash product, as well as the cup product, by applying Theorem \ref{thm:main-2}.

The paper is organized as follows: We briefly recall some basic materials on Hochschild (co)homology, Hopf--Galois extensions and Koszul algebras in Section \ref{sec:preliminaries}. We then reprove Stefan's spectral sequences by providing some technical lemmas in Section \ref{sec:stefan-sequence}. Our main results are in Section \ref{sec:cup-semisimple} where we prove that under two mild hypotheses, $\upHH^{\bullet}(B)$ and the $H$-invariant subalgebra $\upH^{\bullet}(A, B)^{H}$ are isomorphic as graded algebras, which is induced by an explicit chain map. In Section \ref{sec:hoch-smash-koszul}, we consider the special situation that $A$ is a Koszul algebra and $H$ is a finite dimensional semisimple Hopf algebra. In this case, $\upHH^{\bullet}(A\#H)$ is also isomorphic to the $H$-invariant subalgebra of $\upH^\bullet(A^!\otimes (A\#H))$ as graded algebras, where $\upH^\bullet(A^!\otimes (A\#H))$ is the cohomological algebra of the differential graded algebra $A^!\otimes (A\#H)$. We illustrate this result with the smash product of quantum $(-1)$-plane and Kac--Paljutkin Hopf algebra in Section \ref{sec:kac}.

\section{Preliminaries}\label{sec:preliminaries}

Throughout this paper, $\Bbbk$ is a field and all algebras are over $\Bbbk$. Unadorned $\otimes$ and $\Hom$ stand for $\otimes_\Bbbk$ and  $\Hom_\Bbbk$ respectively. For an algebra $A$, $A^{\op}$ is the opposite algebra of $A$, and $A^e:=A\otimes A^{\op}$ is the enveloping algebra of $A$. Thus all $A$-bimodules are identified with left or right $A^e$-modules naturally. For any algebraic object (vector space, algebra, complex, etc.) $U$, denote by $I_U$ the identity map of $U$; we often drop the subscript $U$ if $U$ is definite from the context.

\subsection{Hochschild cohomology}
Let $N$ be an $A$-bimodule. The $m$-th Hochschild cohomological group of $A$ with coefficients in $N$ is $\upH^{m}(A, N):= \Ext^{m}_{A^{e}}(A, N)$. One useful $A^{e}$-projective resolution of $A$ is the bar resolution $B(A)$:
\[
\cdots \stackrel{\delta_{3}}\longrightarrow A^{\otimes 4}\stackrel{\delta_{2}}\longrightarrow A^{\otimes 3}\stackrel{\delta_{1}}\longrightarrow A^{\otimes 2},
\]
where $\delta_{m}(a_{0}\otimes \cdots \otimes a_{m+1})= \sum_{i=0}^{m}(-1)^{i}a_{0}\otimes \cdots \otimes a_{i}a_{i+1}\otimes \cdots \otimes a_{m+1}$ and the augmentation map $\delta_0\colon A^{\otimes 2} \to A$ is the multiplication. Using the bar resolution, $\upH^{\bullet}(A, N):=\bigoplus_{m\in\mathbb{N}}\upH^{m}(A, N)$ is the cohomology of the complex $C^\bullet(A, N)$:
\[
0\To \Hom(\Bbbk, N) \overset{\delta^1}{\To} \Hom(A, N) \overset{\delta^2}{\To} \Hom(A^{\otimes 2}, N) \overset{\delta^3}{\To} \cdots,
\]
whose differentials are given by
\begin{align*}
\delta^m(f)(a_{1},\ldots, a_{m+1})&=a_1f(a_{2}, \ldots, a_{m+1})+\sum_{i=1}^m(-1)^{i} f(a_{1}, \ldots, a_{i}a_{i+1}, \ldots, a_{m+1}) \\
&\mathrel{\phantom{=}}{} +(-1)^{m+1}f(a_{1}, \ldots, a_{m})a_{m+1},
\end{align*}
where for any $m$-cochain $f$, we write $f(a_1,\ldots, a_m)$ instead of $f(a_1\otimes\cdots\otimes a_m)$. Denote $\upHH^{\bullet}(A):=\upH^{\bullet}(A, A) $ if $N=A$.

Recall that the tensor coalgebra $\mathsf{T}^c(A)$ of $A$ is defined to be $\bigoplus_{m\in\mathbb{N}} A^{\otimes m}$ whose comultiplication is give by
\begin{align*}
1&\mapsto 1\otimes 1, \\
a_1\otimes\cdots\otimes a_m &\mapsto 1\otimes(a_1\otimes\cdots\otimes a_m)+(a_1\otimes\cdots\otimes a_m)\otimes 1 \\
& \mathrel{\phantom{\mapsto}} {}+\sum_{i=1}^{m-1} (a_1\otimes\cdots\otimes a_i)\otimes (a_{i+1}\otimes\cdots\otimes a_m).
\end{align*}
Hence $\mathsf{T}^c(A)$ is a graded coalgebra with the degree $m$ component $\mathsf{T}^c(A)_m=A^{\otimes m}$. Notice that $C^{\bullet}(A, N)=\underline{\Hom}(\mathsf{T}^c(A), N):=\bigoplus_{m\in\mathbb{N}}\Hom(\mathsf{T}^c(A)_m, N)$, and thus $C^{\bullet}(A, N)$ admits an associative convolution that preserves grading if $N$ has an algebra structure. In particular, let $A\to B$ be an algebra map. Then $B$ is an $A$-bimodule and the convolution on $C^\bullet(A, B)$ is called the cup product in the literature, written as $\smallsmile$, which is defined by
\begin{equation}\label{eq:cup-original}
(f\smallsmile g)(a_1,\ldots, a_{m+n})=f(a_1,\ldots, a_m)g(a_{m+1},\ldots, a_{m+n})
\end{equation}
for all $m$-cochain $f$ and $n$-cochain $g$. The cup product makes $C^\bullet(A, B)$ into a differential graded algebra, and hence $\upH^\bullet(A, B)$ is a graded algebra.

The cup product can be defined via an arbitrary projective $A$-bimodule resolution $P$ of $A$ (cf.\ \cite{Vol}). Recall that the tensor product complex $P\otimes_A P$ is defined as $(P\otimes_A P)_m=\bigoplus_{i+j=m}P_i\otimes_A P_j$ with differential
\[
d_{P\otimes_A P}(x\otimes_A y)=d_P(x)\otimes_A y+(-1)^i x\otimes_A d_P(y)
\]
for all $x\in P_i$ and $y\in P_j$. There exists a comparison $T\colon P\to P\otimes_A P$ of $A$-bimodule complexes which lifts the canonical isomorphism $A\cong A\otimes_A A$. For any $f\in\Hom_{A^e}(P_m, B)$ and $g\in\Hom_{A^e}(P_n, B)$, we regard $f$, $g$ as the $A$-bimodule homomorphisms $P\to B$ by setting $f|_{P_{m'}}=0=g|_{P_{n'}}$ for all $m'\neq m$ and $n'\neq n$, and we define $f\smallsmile g\in \Hom_{A^e}(P_{m+n}, B)$ by
\[
f\smallsmile g=\mu\circ(f\otimes_A g)\circ T
\]
where $\mu$ stands for the multiplication of $B$. Notice that $(T\otimes_A I)\circ T$ and $(I\otimes_A T)\circ T$ are homotopic (but not equal in general), so $\smallsmile$ is an associative multiplication at the level of cohomology, namely, each pair $(P, T)$ gives rise to a cup product $\smallsmile$ on $\upH^\bullet(A, B)$. Furthermore, $\smallsmile$ turns out to be independent of the choice of $(P, T)$, which coincides with \eqref{eq:cup-original}.

\subsection{Hopf--Galois extensions}

Let $H$ be a Hopf algebra. We use standard notations borrowed from  \cite{Mon93}. As usual, the comultiplication, the counit, and the antipode of $H$ are denoted by $\Delta$, $\eps$, and $S$ respectively. In this paper, $S$ is always assumed to be bijective. We call $B/A$ to be an $H$-extension if $B$ is a right $H$-comodule algebra with the structure map $\rho\colon B\to B\otimes H$, $b\mapsto \sum b_{[0]}\otimes b_{[1]}$, and $A$ is the subalgebra of $H$-coinvariants of $B$, namely, $A=B^{\co H}=\{b\in B\,|\, \rho(b)=b\otimes 1\}$.

For an $H$-extension $B/A$,  $B^{\op}$ is a left $H$-comodule algebra with the structure map  $\rho'\colon B^{\op}\to H\otimes B^{\op}$, $b\mapsto \sum S^{-1}b_{[1]}\otimes b_{[0]}$,  and we have $A^{\op}={}^{\co H}(B^{\op})$ furthermore. It is direct to check that
\begin{gather*}
\gamma:=I\otimes_A \rho\colon B\otimes_A B\to B\otimes_A B \otimes H, \\
\gamma':=\rho'\otimes_A I\colon  B^{\op}\otimes_A B\to H\otimes B^{\op}\otimes_A B
\end{gather*}
are well-defined maps, making $B\otimes_A B$ into a right $H$-comodule and $B^{\op}\otimes_A B$ into a left $H$-comodule respectively.

Associated to an $H$-extension $B/A$  there is a canonical map $\beta\colon B\otimes_A B\to B\otimes H$, $b\otimes_A b'\mapsto\sum bb'_{[0]}\otimes b'_{[1]}$. The $H$-extension $B/A$ is said to be
\begin{enumerate}
	\item flat if $B$ is flat over $A$, i.e., $B$ is a flat left and a flat right $A$-module;
	\item Galois if the canonical map $\beta$ is bijective.
\end{enumerate}
Define $\tau\colon B\otimes H\to H\otimes B$, $b\otimes h\mapsto \sum hS^{-1}b_{[1]}\otimes b_{[0]}$. This is bijective with $\tau^{-1}(h\otimes b)= \sum b_{[0]}\otimes hb_{[1]}$. The composition $\beta'=\tau\circ\beta\colon B\otimes_A B\to H\otimes B$ is given by $\beta'(b\otimes_A b')= \sum S^{-1}b_{[1]}\otimes b_{[0]}b'$. So $B/A$ is Galois if and only if $\beta'$ is bijective.

Let $B/A$ be an $H$-Galois extension. Recall that for any $h\in H$, $\beta^{-1}(1\otimes h)$ is denoted by $\sum_i l_i(h)\otimes_A r_i(h)$ by convention. It follows that $(\beta')^{-1}(h\otimes 1)=\beta^{-1}(1\otimes h)=\sum_i l_i(h)\otimes_A r_i(h)$.  Since $\beta$ and $\beta'$ are right and left $H$-comodule homomorphisms respectively, one has the following equations:
\begin{align}
& \sum_il_i(h)r_i(h)=\eps(h), \label{eq:galois-1}\\
& \sum_i \gamma(l_i(h)\otimes_A r_i(h))=\sum_i l_i(h_{(1)})\otimes_A r_i(h_{(1)})\otimes h_{(2)}, \label{eq:galois-2}\\
& \sum_i \gamma'(l_i(h)\otimes_A r_i(h))=\sum_i h_{(1)}\otimes l_i(h_{(2)})\otimes_A r_i(h_{(2)}), \label{eq:galois-3}\\
& \sum_i l_i(hh')\otimes_A r_i(hh')=\sum_{i,j} l_i(h')l_j(h)\otimes_A r_j(h)r_i(h'), \label{eq:galois-4}\\
& \sum_i al_i(h)\otimes_A r_i(h)=\sum_i l_i(h)\otimes_A r_i(h)a, \label{eq:galois-5}
\end{align}
for all $h$, $h'\in H$ and $a\in A$.

\subsection{Koszul algebras}

Let $V$ be a finite dimensional vector space, and $\mathsf{T}(V)$ be the tensor algebra with the usual grading. A graded algebra $A=\mathsf{T}(V)/(R)$ is called a quadratic algebra if $R$ is subspace of $V^{\otimes 2}$. The homogeneous dual $A^!$ of $A$ is defined as $\mathsf{T}(V^*)/(R^\perp)$, where $V^*$ is the dual space of $V$ and $R^\perp$ is the orthogonal space of $R$ in $(V^*)^{\otimes 2}$.

\begin{rem}
	In the definition of $A^!$, one should identify $(V^*)^{\otimes 2}$ with $(V^{\otimes 2})^*$. There are two manners in the literature: $(\xi_1\otimes \xi_2)(v_1\otimes v_2)=\xi_1(v_1)\xi_2(v_2)$, or $(\xi_1\otimes \xi_2)(v_1\otimes v_2)=\xi_2(v_1)\xi_1(v_2)$, where $\xi_1$, $\xi_2\in V^*$ and $v_1$, $v_2\in V$. In this paper, we adopt the former.
\end{rem}

Let $\{e_i\}_{1\leq i\leq n}$ be a basis of $V$ and $\{e^i\}_{1\leq i\leq n}$ be the dual basis of $V^*$. For any $m\geq 0$, the $m$th homogeneous component of $A^!$ is denoted by $A^!_m$, and let $(A^!)^*=\bigoplus_{m\in\mathbb{N}} (A^!_m)^*$ be the graded dual space of $A^!$. Then $(A^!)^*$ is an $A^!$-bimodule in a natural way. In particular, for each $\alpha\in (A^!_m)^*$, $e^i\alpha$ and $\alpha e^i$ as two linear maps $(A^!_{m+1})^*\to (A^!_m)^*$ are given by
\[
e^i\alpha(\xi)=\alpha(\xi e^i)\quad\text{and}\quad \alpha e^i(\xi)=\alpha(e^i\xi)
\]
respectively, for all $\xi\in A^!_m$.  There are two $A$-bimodule homomorphisms
\[
d_l, d_r\colon A\otimes(A^!_{m+1})^*\otimes A\To A\otimes(A^!_{m})^*\otimes A
\]
given by $d_l(a\otimes\alpha\otimes a')=\sum_iae_i\otimes\alpha e^i\otimes a'$ and $d_r(a\otimes\alpha\otimes a')=\sum_ia\otimes e^i \alpha \otimes e_ia'$. Let $d_K^{m}=d_l-(-1)^{m-1}d_r$. Then
\[
\cdots\To A\otimes(A^!_{m+1})^*\otimes A\overset{d_K^{m+1}}{\To} A\otimes(A^!_{m})^*\otimes A \overset{d_K^m}{\To} \cdots\overset{d_K^1}{\To} A\otimes(A^!_{0})^*\otimes A
\]
is a complex, called the Koszul bimodule complex of $A$. We denoted this complex by $K(A)$.

\begin{defn}
	A quadratic algebra $A$ is called Koszul if the trivial left $A$-module $\Bbbk$ admits a projective resolution $P$ such that $P_m$ is generated in degree $m$ for all $m\geq 0$.
\end{defn}

A quadratic algebra $A$ is Koszul if and only if $K(A)$ is a resolution of $A$ via the multiplication $A\otimes (A^!_{0})^*\otimes A\to A$. In this case, $(A^!)^*$ is finite dimensional. By identifying $W^{**}$ with $W$ for any finite dimensional vector space $W$, we have
\begin{equation}\label{eq:koszul-embedding}
(A^!_m)^*=\bigcap_{u+v=m-2} V^{\otimes u}\otimes R \otimes V^{\otimes v}  \subseteq A^{\otimes m}
\end{equation}
for all $m\geq 2$. Additionally, $(A^!_0)^*=\Bbbk$ and $(A^!_1)^*=V$. Hence $K(A)_m\subseteq A^{\otimes m+2}$. Notice that $A^{\otimes m+2}$ is the $m$-th component of the bar complex $B(A)$ of $A$, and $K(A)$ is actually a subcomplex of $B(A)$ via the above inclusion.

\begin{rem}
	For any Koszul algebra $A$, $K(A)$ is the minimal free resolution of $A$ over $A^e$.
\end{rem}

\section{Stefan's spectral sequences for Hopf--Galois extensions}\label{sec:stefan-sequence}

From now on, let $B/A$ be a flat $H$-Galois extension. In this case, $B^{\op}/A^{\op}$ is a left $H$-extension. If $A^{\op}$ is just regarded as a subspace, not a subalgebra, of $B^{\op}$,  we often drop the superscript ``op'' and write $A={}^{\co H}(B^{\op})$ for simplicity.

Notice that $(-)^{\co H}=-\square_H \Bbbk$ and ${}^{\co H}(-)=\Bbbk\square_H-$, we then have
\begin{gather}
 (B\otimes_A B)^{\co H}=B\otimes_A A, \label{eq:cotensor-1}\\
 {}^{\co H}(B^{\op}\otimes_A B)=A\otimes_A B, \label{eq:cotensor-2}\\
 (B\otimes_A B)\square_H (B^{\op}\otimes_A B)=B\otimes_A \Gamma\otimes_A B, \label{eq:cotensor-3}
\end{gather}
since $B$ is flat over $A$ and thus $B\otimes_A-$, $-\otimes_A B$ commute with taking cotensor products. Denote by $\Gamma$ the cotensor product $B\square_H B^{\op}$ (see \cite{WZ16}), namely,
\[
\Gamma=\biggl\{\sum_p x_p\otimes y_p\in B^e\,\biggm|\,\sum_p x_{p[0]}\otimes x_{p[1]}\otimes y_p=\sum_p x_p\otimes S^{-1}y_{p[1]}\otimes y_{p[0]} \biggr\}.
\]
Obviously, $\Gamma$ is a subalgebra of $B^e$ which contains $A^e$.

\begin{rem}
	$B^e$ is a right $H$-comodule via $b\otimes b'\mapsto \sum b_{[0]}\otimes b'_{[0]}\otimes b_{[1]}b'_{[1]}$. It turns out $(B^e)^{\co H}=\Gamma$ (see \cite[Lemma 2.2]{CCMT07}).
\end{rem}

\begin{rem}
	When $A$ is an $H$-module algebra and $B=A\# H$, one has $\Gamma\cong A^e\rtimes H$ (see \cite{BW07}, \cite{Kay07}).
\end{rem}

Stefan proved that there exists spectral sequences
\begin{gather*}
E^2_{pq}=\upH_p(H, \upH_q(A, N))\Longrightarrow \upH_{p+q}(B, N), \\
E_2^{pq}=\upH^p(H, \upH^q(A, N))\Longrightarrow \upH^{p+q}(B, N)
\end{gather*}
for any $B$-bimodule $N$. In his poof, the $H$-bimodule structures on $\upH_{\bullet}(A, N)$ and $\upH^{\bullet}(A, N)$  are defined by virtue of universal $\delta$-functor. We will reprove Stefan's spectral sequences by introducing $H$-bimodule structures at the level of complex, which enable us to compute homology and cohomology expediently for semisimple $H$ (see Sec.~\ref{sec:cup-semisimple}).

Let $M$ be a left $\Gamma$-module. For any $\sum_px_p\otimes y_p\in \Gamma$ and $m\in M$, we write $\sum_p x_pmy_p$ instead of $(\sum_p x_p\otimes y_p)\cdot m$ for our purposes, although the $\Gamma$-module structure on $M$ is not necessarily obtained by a $B$-bimodule structure.

\begin{lem}\label{lem:h-gamma}
	For all $h\in H$, $\sum_{i,j}l_i(h_{(1)})\otimes_A r_i(h_{(1)})\otimes l_j(h_{(2)})\otimes_A r_j(h_{(2)}) \in B\otimes_A \Gamma\otimes_A B$.
\end{lem}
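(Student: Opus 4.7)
The plan is to use the identification \eqref{eq:cotensor-3}, which says that $B\otimes_A\Gamma\otimes_A B=(B\otimes_A B)\square_H(B^{\op}\otimes_A B)$, where the cotensor product on the right uses the right $H$-coaction $\gamma=I\otimes_A\rho$ on $B\otimes_A B$ and the left $H$-coaction $\gamma'=\rho'\otimes_A I$ on $B^{\op}\otimes_A B$. So it suffices to check that the element
\[
\omega(h):=\sum_{i,j}l_i(h_{(1)})\otimes_A r_i(h_{(1)})\otimes l_j(h_{(2)})\otimes_A r_j(h_{(2)})
\]
satisfies the cotensor-product condition $(\gamma\otimes I\otimes_A I)(\omega(h))=(I\otimes_A I\otimes\gamma')(\omega(h))$ inside $(B\otimes_A B)\otimes H\otimes (B^{\op}\otimes_A B)$.

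Applying \eqref{eq:galois-2} to the first two tensor slots gives
\[
(\gamma\otimes I\otimes_A I)(\omega(h))=\sum_{i,j} l_i(h_{(1)(1)})\otimes_A r_i(h_{(1)(1)})\otimes h_{(1)(2)}\otimes l_j(h_{(2)})\otimes_A r_j(h_{(2)}),
\]
while applying \eqref{eq:galois-3} to the last two tensor slots gives
\[
(I\otimes_A I\otimes\gamma')(\omega(h))=\sum_{i,j} l_i(h_{(1)})\otimes_A r_i(h_{(1)})\otimes h_{(2)(1)}\otimes l_j(h_{(2)(2)})\otimes_A r_j(h_{(2)(2)}).
\]
Both expressions then coincide after invoking coassociativity $(\Delta\otimes I)\Delta=(I\otimes\Delta)\Delta$, which rewrites each as $\sum_{i,j} l_i(h_{(1)})\otimes_A r_i(h_{(1)})\otimes h_{(2)}\otimes l_j(h_{(3)})\otimes_A r_j(h_{(3)})$ in Sweedler notation.

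The argument is essentially a two-line computation, so there is no serious obstacle; the only minor subtlety is justifying the identification \eqref{eq:cotensor-3} of $B\otimes_A\Gamma\otimes_A B$ with the cotensor product, which rests on the flatness of $B$ over $A$ and is already quoted. Thus the lemma reduces to checking the cotensor compatibility slot by slot, and coassociativity does the rest.
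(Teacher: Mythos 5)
Your proposal is correct and follows essentially the same route as the paper: the paper also verifies the cotensor condition $(\gamma\otimes I\otimes_A I)(\widetilde{h})=(I\otimes_A I\otimes\gamma')(\widetilde{h})$ using \eqref{eq:galois-2} and \eqref{eq:galois-3} together with coassociativity, and then invokes \eqref{eq:cotensor-3}. No issues.
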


\begin{proof}
	Denote $\widetilde{h}=\sum_{i,j}l_i(h_{(1)})\otimes_A r_i(h_{(1)})\otimes l_j(h_{(2)})\otimes_A r_j(h_{(2)})$. By \eqref{eq:galois-2} and \eqref{eq:galois-3} we have
	\begin{align*}
	(\gamma\otimes I\otimes_A I)(\widetilde{h})&=\sum_{i,j} \gamma(l_i(h_{(1)})\otimes_A r_i(h_{(1)}))\otimes l_j(h_{(2)})\otimes_A r_j(h_{(2)}) \\
	&=\sum_{i, j} l_i(h_{(1)})\otimes_A r_i(h_{(1)})\otimes h_{(2)} \otimes l_j(h_{(3)})\otimes_A r_j(h_{(3)}) \\
	&=\sum_{i, j} l_i(h_{(1)})\otimes_A r_i(h_{(1)})\otimes \gamma'(l_i(h_{(2)})\otimes_A r_i(h_{(2)})) \\
	&=(I\otimes_A I\otimes \gamma')(\widetilde{h}).
	\end{align*}
	Hence $\widetilde{h}\in (B\otimes_A B)\square_H (B^{\op}\otimes_A B)=B\otimes_A \Gamma\otimes_A B$ by \eqref{eq:cotensor-3}.
\end{proof}

\begin{prop}\label{prop:module-structure}
	Let $M$ be a left $\Gamma$-module and $N$ be a $B$-bimodule. Then there is a left $H$-module structure on $N\otimes_{A^e}M$ defined by
	\[
	h\rightharpoonup (n\otimes_{A^e}m)=\sum_{i,j} r_j(h_{(2)})n l_i(h_{(1)})\otimes_{A^e} r_i(h_{(1)})ml_j(h_{(2)}),
	\]
	as well as a right $H$-module structure on $\Hom_{A^e}(M, N)$ defined by
	\[
	(f\leftharpoonup h)(m)=\sum_{i,j}l_i(h_{(1)})f(r_i(h_{(1)})ml_j(h_{(2)}))r_j(h_{(2)}),
	\]
	for any $h\in H$, $m\in M$, $n\in N$, and $f\in \Hom_{A^e}(M, N)$.
\end{prop}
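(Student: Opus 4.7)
The plan is to verify two things for each of the proposed actions: (i) well-definedness, taking into account both the relations in $\otimes_{A^e}$ and the fact that the sum $\sum_i l_i(h)\otimes_A r_i(h)=\beta^{-1}(1\otimes h)$ has inherent $\otimes_A$-ambiguity; and (ii) the unit and multiplicativity axioms of a left (respectively right) $H$-module.

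For well-definedness, the key observation is that by Lemma \ref{lem:h-gamma} the element
\[
\widetilde{h}=\sum_{i,j}l_i(h_{(1)})\otimes_A r_i(h_{(1)})\otimes l_j(h_{(2)})\otimes_A r_j(h_{(2)})
\]
lies in $B\otimes_A\Gamma\otimes_A B$. Consequently the inner pair $\sum r_i(h_{(1)})\otimes l_j(h_{(2)})$ genuinely represents an element of $\Gamma$ acting on $m\in M$ (after absorbing the boundary $A$-factors into the outer $B$-pieces), so the right-hand sides make sense as elements of $N\otimes_{A^e}M$ and $\Hom_{A^e}(M,N)$ respectively. To check compatibility with the $A^e$-balancing one uses equation (\ref{eq:galois-5}) to slide any $a\in A$ across $\otimes_A$ inside $\sum_i l_i(h)\otimes_A r_i(h)$; combined with the $B$-bimodule structure on $N$ and the inclusion $A^e\subseteq\Gamma$ (so $A$-multiplication on $M$ agrees with the $\Gamma$-action) this yields the required invariance on both sides of $\otimes_{A^e}$.

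For the module axioms, the unit case is immediate: since $\beta(1\otimes_A 1)=1\otimes 1_H$, we may take a single term with $l(1_H)=r(1_H)=1$, whence $1_H\rightharpoonup(n\otimes_{A^e}m)=n\otimes_{A^e}m$ and $(f\leftharpoonup 1_H)(m)=f(m)$. For multiplicativity one expands $h\rightharpoonup(h'\rightharpoonup(n\otimes_{A^e}m))$ and substitutes $(hh')_{(1)}\otimes(hh')_{(2)}=h_{(1)}h'_{(1)}\otimes h_{(2)}h'_{(2)}$ on the other side; applying equation (\ref{eq:galois-4}) in both tensor slots converts $\sum_i l_i(h_{(1)}h'_{(1)})\otimes_A r_i(h_{(1)}h'_{(1)})$ and $\sum_j l_j(h_{(2)}h'_{(2)})\otimes_A r_j(h_{(2)}h'_{(2)})$ into the nested sums arising from iterating $\rightharpoonup$, after which the two expressions match term by term. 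The verification for $\leftharpoonup$ is dual, paying attention that composition order is reversed (so that one must check $(f\leftharpoonup h)\leftharpoonup h'=f\leftharpoonup(hh')$).

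The main obstacle I anticipate is purely notational: managing the four summation indices $i,j,k,l$ in the multiplicativity check and aligning them correctly after the two invocations of (\ref{eq:galois-4}). Conceptually nothing beyond Lemma \ref{lem:h-gamma} and the Galois identities (\ref{eq:galois-1})--(\ref{eq:galois-5}) is needed; the proof is essentially a disciplined bookkeeping exercise once one recognises that the whole construction is governed by the single element $\widetilde{h}\in B\otimes_A\Gamma\otimes_A B$ sandwiched between $N$ and $M$.
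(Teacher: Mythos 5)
Your proposal is correct and follows essentially the same route as the paper: well-definedness via Lemma \ref{lem:h-gamma} (the element $\widetilde{h}$ living in $B\otimes_A\Gamma\otimes_A B$), $A$-bilinearity via \eqref{eq:galois-5}, and the multiplicativity axioms via \eqref{eq:galois-4} applied in both tensor slots. The only addition is your explicit check of the unit axiom, which the paper leaves implicit.
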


\begin{proof}
	It follows from Lemma \ref{lem:h-gamma} that the definitions for  $h\rightharpoonup (n\otimes_{A^e}m)$ and $f\leftharpoonup h$ are well-defined, since $\otimes$ and $\Hom$ are both over $A^e$. The fact that $f\leftharpoonup h$ is also $A$-bilinear is an immediate conclusion of \eqref{eq:galois-5}. To complete the proof, we have to show $h\rightharpoonup(h'\rightharpoonup(n\otimes_{A^e} m))=hh'\rightharpoonup(n\otimes_{A^e}m)$ and $(f\leftharpoonup h)\leftharpoonup h'=f\leftharpoonup hh'$, which follow from \eqref{eq:galois-4}.
\end{proof}

\begin{prop}\label{prop:invariant}
	Let $M$, $N$ be as above. Denote by $[H, N\otimes_{A^e}M]$ the space spanned by $h\rightharpoonup (n\otimes_{A^e}m)-\eps(h)n\otimes_{A^e}m$ for all $h$, $m$ and $n$. Then
	\begin{enumerate}
		\item $(N\otimes_{A^e} M)_H:=N\otimes_{A^e} M/[H, N\otimes_{A^e}M]$  is equal to $N\otimes_\Gamma M$, and
		\item $\Hom_{A^e}(M, N)^H:=\{f\in \Hom_{A^e}(M, N) \,|\, f\leftharpoonup h=\eps(h)f, \forall\, h\in H\}$, the space of $H$-invariants, is equal to $\Hom_\Gamma(M, N)$.
	\end{enumerate}
\end{prop}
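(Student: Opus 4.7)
The plan is to prove both (1) and (2) by a pair of inclusions each, and to observe that these two statements are formally dual: both rest on the same computations built from Lemma \ref{lem:h-gamma} and the Galois identities \eqref{eq:galois-1}--\eqref{eq:galois-5}. In each case, one inclusion is a direct calculation, and the reverse inclusion requires a structural input about how $\Gamma$ is generated from the Galois data.

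For (2), the easy direction $\Hom_\Gamma(M,N)\subseteq\Hom_{A^e}(M,N)^H$ is immediate. Using that the middle factor $\sum_{i,j}r_i(h_{(1)})\otimes l_j(h_{(2)})$ lies in $\Gamma$ by Lemma \ref{lem:h-gamma}, $\Gamma$-linearity of $f$ lets me pull that element through $f$, after which \eqref{eq:galois-1} collapses each outer pair $\sum_i l_i(h_{(k)})r_i(h_{(k)})=\eps(h_{(k)})$, yielding $(f\leftharpoonup h)(m)=\eps(h)f(m)$. For (1), the parallel calculation goes as follows: moving the $\Gamma$-factor $r_i(h_{(1)})\otimes l_j(h_{(2)})$ from the $m$-side of $\otimes_\Gamma$ to the $n$-side via the $\Gamma$-balance relation rewrites
\[
\sum_{i,j}r_j(h_{(2)})n\,l_i(h_{(1)})\otimes r_i(h_{(1)})m\,l_j(h_{(2)}) = \sum_{i,j}l_j(h_{(2)})r_j(h_{(2)})\,n\,l_i(h_{(1)})r_i(h_{(1)})\otimes m,
\]
which collapses to $\eps(h)(n\otimes m)$ by \eqref{eq:galois-1}; hence $[H,N\otimes_{A^e}M]$ is contained in the kernel of $N\otimes_{A^e}M\twoheadrightarrow N\otimes_\Gamma M$.

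The reverse directions---that $H$-invariance (resp.\ $H$-coinvariance) suffices to recover $\Gamma$-linearity (resp.\ $\Gamma$-balance)---constitute the main obstacle. The strategy is to show that $\Gamma$ is generated, as an $A^e$-algebra, by the image of $H$ under the assignment $h\mapsto\sum_{i,j}r_i(h_{(1)})\otimes l_j(h_{(2)})$; granting this, the easy-direction computation run in reverse, combined with the $A^e$-linearity (resp.\ $A^e$-balance) already available and with the multiplicativity \eqref{eq:galois-4} of $\beta^{-1}$, extends the required property from these distinguished generators to arbitrary $\gamma\in\Gamma$. The verification of this generation statement is the technical heart; in the smash product case $B=A\#H$, where $\Gamma\cong A^e\rtimes H$, it is elementary, but in the general Hopf--Galois setting it uses the canonical isomorphism $\beta$ together with faithful flatness of $B$ over $A$ in an essential way.
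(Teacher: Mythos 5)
Your overall skeleton is right, and your easy directions are essentially correct, but even there you should be careful about one point: $\sum_{i,j}r_i(h_{(1)})\otimes l_j(h_{(2)})$ is not by itself exhibited as an element of $\Gamma$. Lemma \ref{lem:h-gamma} only places the aggregate $\sum_{i,j}l_i(h_{(1)})\otimes_A\bigl(r_i(h_{(1)})\otimes l_j(h_{(2)})\bigr)\otimes_A r_j(h_{(2)})$ in $B\otimes_A\Gamma\otimes_A B$, and the outer legs are entangled with the middle through the indices $i,j$ and the coproduct. To pull ``the middle'' through a $\Gamma$-linear $f$, or across $\otimes_\Gamma$, you must first rewrite the aggregate as a genuine finite sum $\sum_q b_q\otimes_A \gamma_q\otimes_A b_q'$ with $\gamma_q\in\Gamma$ and then collapse the outer legs with \eqref{eq:galois-1}; your sketch elides this bookkeeping, though it is harmless.

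The genuine gap is in the hard direction. You correctly isolate the needed structural input --- that every element of $\Gamma$ is an $A^e$-combination of the distinguished elements built from $\beta^{-1}$ --- but you do not prove it; you only assert that it ``uses $\beta$ and faithful flatness in an essential way.'' That assertion is the entire content of the proposition, so the proposal is incomplete at its central step. For the record, the claim is true, and the paper establishes a sharper version directly: given $\sum_p x_p\otimes y_p\in\Gamma$, one forms $z=\sum_{p,i,j}x_{p[0]}l_i(x_{p[1]})\otimes_A r_i(x_{p[1]})\otimes l_j(x_{p[2]})\otimes_A r_j(x_{p[2]})y_p$ and shows, by applying $I\otimes_A I\otimes\gamma$ and $\gamma'\otimes I\otimes_A I$ and using the defining coinvariance of $\Gamma$ together with \eqref{eq:cotensor-1}--\eqref{eq:cotensor-3}, that $z\in A\otimes_A\Gamma\otimes_A A$; multiplying the outer legs back into the middle recovers $\sum_p x_p\otimes y_p$ by \eqref{eq:galois-1}. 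This one computation simultaneously proves your generation statement (indeed left $A^e$-module generation, which is stronger than algebra generation) and, because the outer legs land in $A$, licenses moving them across $\otimes_{A^e}$ (resp.\ through an $A^e$-linear map), which is exactly what makes the ``reverse'' computation close up. Note also that the mechanism is the coinvariant identifications afforded by flatness of $B$ over $A$ (automatic here by \cite{KT81}), not faithful flatness as such. Until you supply this argument or a substitute for it, the proof is not complete.
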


\begin{proof}
	(1) The canonical projection $N\otimes_{A^e} M \to N\otimes_{\Gamma} M$ induces $(N\otimes_{A^e} M)_H\to N\otimes_{\Gamma} M$ by \eqref{eq:galois-1}, which is surjective. In order to prove (1), it is sufficient to show that for any $\sum_p x_p\otimes y_p\in \Gamma$,
	\[
	\sum_p y_pn x_p\otimes_{A^e} m-\sum_p n\otimes_{A^e} x_pmy_p\in [H, N\otimes_{A^e}M],
	\]
	or equivalently
	\[
	\sum_p y_pn x_p\otimes_{A^e} m=\sum_p n\otimes_{A^e} x_pmy_p
	\]
	as elements of $(N\otimes_{A^e} M)_H$.
	
	Let
	\[
	z=\sum_{p,i,j} x_{p[0]}l_i(x_{p[1]})\otimes_A r_i(x_{p[1]})\otimes l_j(x_{p[2]})\otimes_A r_j(x_{p[2]})y_p.
	\]
	We then have
	\begin{align*}
	(I\otimes_A I &\otimes \gamma)(z)=\sum_{p,i,j} x_{p[0]}l_i(x_{p[1]})\otimes_A r_i(x_{p[1]})\otimes \gamma(l_j(x_{p[2]})\otimes_A r_j(x_{p[2]})y_{p}) \\
	&= \sum_{p,i,j} x_{p[0]}l_i(x_{p[1]})\otimes_A r_i(x_{p[1]})\otimes \gamma(l_j(x_{p[2]})\otimes_A r_j(x_{p[2]}))\gamma(1\otimes_A y_{p}) \\
	&= \sum_{p,i,j} x_{p[0]}l_i(x_{p[1]})\otimes_A r_i(x_{p[1]})\otimes l_j(x_{p[2]})\otimes_A r_j(x_{p[2]})y_{p[0]} \otimes x_{p[3]}y_{p[1]} \\
	&= \sum_{p,i,j} x_{p[0]}l_i(x_{p[1]})\otimes_A r_i(x_{p[1]})\otimes l_j(x_{p[2]})\otimes_A r_j(x_{p[2]})y_{p} \otimes x_{p[3]}Sx_{p[4]} \\
	&= \sum_{p,i,j} x_{p[0]}l_i(x_{p[1]})\otimes_A r_i(x_{p[1]})\otimes l_j(x_{p[2]})\otimes_A r_j(x_{p[2]})y_{p} \otimes \eps(x_{p[3]}) \\
	&=z\otimes 1,
	\end{align*}
	yielding $z\in B\otimes_A B\otimes B\otimes_A A$ by \eqref{eq:cotensor-1}. Notice that $z$ is also equal to
	\[
	\sum_{p,i,j} x_p l_i(S^{-1}y_{p[2]})\otimes_A r_i(S^{-1}y_{p[2]})\otimes l_j(S^{-1}y_{p[1]})\otimes_A r_j(S^{-1}y_{p[1]})y_{p[0]},
	\]
	and apply $\gamma'\otimes I\otimes_A I$ to $z$ which is expressed as above. Using a similar argument, we obtain $z\in A\otimes_A B\otimes B\otimes_A B$ by \eqref{eq:cotensor-2}, and thus $z\in A\otimes_A B\otimes B\otimes_A A$. Applying Lemma \ref{lem:h-gamma} or \eqref{eq:galois-1}, we have $z\in A\otimes_A \Gamma\otimes_A A$.
	
	Accordingly, in $(N\otimes_{A^e} M)_H$,  we obtain
	\begin{align*}
	\sum_p y_pn x_p\otimes_{A^e} m&=\sum_p \eps(x_{p[1]})y_pn x_{p[0]}\otimes_{A^e} m \\
	&=\sum_p x_{p[1]}\rightharpoonup(y_pn x_{p[0]}\otimes_{A^e} m) \\
	&=\sum_{p,i,j}  r_j(x_{p[2]})y_pn x_{p[0]}l_i(x_{p[1]}) \otimes_{A^e} r_i(x_{p[1]})ml_j(x_{p[2]}) \\
	&=\sum_{p,i,j}  n  \otimes_{A^e} x_{p[0]}l_i(x_{p[1]})r_i(x_{p[1]})ml_j(x_{p[2]})r_j(x_{p[2]})y_p \\
	&=\sum_{p}  n  \otimes_{A^e} x_{p[0]}\eps(x_{p[1]})m\eps(x_{p[2]})y_p \\
	&=\sum_{p}  n  \otimes_{A^e} x_{p}my_p,
	\end{align*}
	so the proof of the first assertion is finished.
	
	(2) The inclusion $\Hom_\Gamma(M, N)\subseteq \Hom_{A^e}(M, N)^H$ is obvious, by \eqref{eq:galois-1}. For the opposite direction, the argument is quite similar to the above. For any $f\in \Hom_{A^e}(M, N)^H$, we have $(f\leftharpoonup h)(m)=\eps(h)m$. In particular,
	\begin{align*}
	\sum_px_pf(m)y_p&=\sum_px_{p[0]}\eps(x_{p[1]})f(m)y_p=\sum_px_{p[0]}(f\leftharpoonup x_{p[1]})(m)y_p \\
	&=\sum_{p,i,j} x_{p[0]}l_i(x_{p[1]}) f(r_i(x_{p[1]})m l_j(x_{p[2]})) r_j(x_{p[2]})y_p \\
	&=\sum_{p,i,j}  f(x_{p[0]}l_i(x_{p[1]})r_i(x_{p[1]})m l_j(x_{p[2]})r_j(x_{p[2]})y_p ) \\
	&=\sum_{p}  f(x_{p[0]}\eps(x_{p[1]}) m \eps(x_{p[2]})y_p ) =f\Biggl(\sum_p x_pmy_p\biggr).
	\end{align*}
	Therefore, $f$ is $\Gamma$-linear, as desired.
\end{proof}

\begin{lem}\label{lem:flat-over-d}
	Let $B/A$ be a flat $H$-Galois extension, and $\Gamma$ be as above. Then
	\begin{enumerate}
		\item $A$ is a left $\Gamma$-module in a natural way,
		\item $B^e$ is flat as a right $\Gamma$-module, and $B^e\otimes_\Gamma A\cong B$ via $(b\otimes b')\otimes_\Gamma a\mapsto bab'$.
	\end{enumerate}
\end{lem}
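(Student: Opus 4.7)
The plan is to handle the two assertions separately.

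For (1), I would directly check that the bimodule action $(x\otimes y)\cdot b = xby$ of $B^e$ on $B$ restricts to an action of the subalgebra $\Gamma$ on the subspace $A$. Given $\xi = \sum_p x_p\otimes y_p \in \Gamma = (B^e)^{\co H}$ and $a\in A = B^{\co H}$, the task is to show $\sum_p x_p a y_p\in A$, i.e.\ $\rho(\sum_p x_p a y_p) = \sum_p x_p a y_p \otimes 1$. Since $\rho$ is multiplicative and $\rho(a)=a\otimes 1$,
\[
\rho\bigl(\textstyle\sum_p x_p a y_p\bigr) \;=\; \sum_p x_{p[0]} a y_{p[0]} \otimes x_{p[1]} y_{p[1]}.
\]
The coinvariance of $\xi$ in $B^e$ reads $\sum_p x_{p[0]}\otimes y_{p[0]}\otimes x_{p[1]} y_{p[1]} = \sum_p x_p\otimes y_p\otimes 1$; applying the sandwich map $b\otimes b'\otimes h\mapsto bab'\otimes h$ to both sides gives exactly the desired identity. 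Associativity and unitality of the $\Gamma$-action on $A$ are inherited from those of the $B^e$-action on $B$.

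For (2), the decisive step is to upgrade the multiplication map to a $(B,\Gamma)$-bimodule isomorphism
\[
\mu\colon B\otimes_A \Gamma \;\xrightarrow{\ \sim\ }\; B^e, \qquad b\otimes_A \xi \mapsto b\,\xi,
\]
which is an instance of the structure theorem for relative Hopf modules. Here $B^e$ is viewed as a $(B,H)$-Hopf module via left multiplication on the first tensor factor and the diagonal coaction $x\otimes y\mapsto x_{[0]}\otimes y_{[0]}\otimes x_{[1]} y_{[1]}$, whose coinvariants form exactly $\Gamma$. Since $B/A$ is flat Hopf--Galois, an explicit inverse to $\mu$ can be written down using the translation map $\beta^{-1}(1\otimes h) = \sum_i l_i(h)\otimes_A r_i(h)$, with the relations \eqref{eq:galois-1}--\eqref{eq:galois-5} ensuring both that its image lies in $B\otimes_A \Gamma$ and that it is a two-sided inverse to $\mu$. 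Granting $\mu$, flatness is immediate: for any left $\Gamma$-module $X$,
\[
B^e\otimes_\Gamma X \;\cong\; (B\otimes_A \Gamma)\otimes_\Gamma X \;\cong\; B\otimes_A X,
\]
where $X$ is viewed as a left $A$-module by restriction along $a\mapsto a\otimes 1\in\Gamma$, and $B\otimes_A -$ is exact by flatness of $B$ over $A$. Specializing to $X=A$ (with the $\Gamma$-action from part (1)) and tracing the composition $B^e\otimes_\Gamma A\cong B\otimes_A \Gamma\otimes_\Gamma A\cong B\otimes_A A\cong B$ confirms that the induced isomorphism sends $(b\otimes b')\otimes_\Gamma a$ to $bab'$.

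The main obstacle is producing the isomorphism $\mu$. The technical work lies in writing the inverse map $\nu\colon B^e\to B\otimes_A \Gamma$ explicitly in terms of the translation map and the coaction on the second tensor factor, then verifying via \eqref{eq:galois-2}--\eqref{eq:galois-3} that the $\Gamma$-component is genuinely $H$-coinvariant, and via \eqref{eq:galois-1} and \eqref{eq:galois-4} that $\mu\nu = \mathrm{id}$ and $\nu\mu = \mathrm{id}$. Once this is done, everything else reduces to formal manipulations of tensor products.
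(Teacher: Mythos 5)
Your proposal is correct and takes essentially the same route as the paper: part (1) is the direct restriction check that the paper delegates to \cite[Lemma 2.2]{Wang18}, and part (2) hinges on exactly the isomorphism $B\otimes_A\Gamma\cong B^e$ that the paper cites from the proof of \cite[Proposition 2.3]{CCMT07}. Your derivation of flatness and of the formula $(b\otimes b')\otimes_\Gamma a\mapsto bab'$ from that isomorphism is the intended formal argument, so the only work you leave implicit (writing $\nu$ via the translation map and checking coinvariance of its $\Gamma$-component) is precisely the content of the cited reference.
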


\begin{proof}
	(1) was proved in \cite[Lemma 2.2]{Wang18}, and (2) follows from the isomorphism $B\otimes_A \Gamma\cong B^e$, which is directly deduced from the proof of \cite[Proposion 2.3]{CCMT07}.
\end{proof}

Now we can reprove Stefan's spectral sequences.

\begin{thm}[\cite{Ste95}]
	Let $B/A$ be a flat $H$-Galois extension. Then for any $B$-bimodule $N$, there are convergent spectral sequences
	\begin{gather*}
	E^2_{pq}=\upH_p(H, \upH_q(A, N))\Longrightarrow \upH_{p+q}(B, N), \\
	E_2^{pq}=\upH^p(H, \upH^q(A, N))\Longrightarrow \upH^{p+q}(B, N).
	\end{gather*}
\end{thm}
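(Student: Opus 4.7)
The plan is to realize both spectral sequences as the two filtration spectral sequences of a single first-quadrant double complex, built from a $\Gamma$-projective resolution of $A$ together with an $H$-projective resolution of $\Bbbk$, using Propositions \ref{prop:module-structure}--\ref{prop:invariant} and Lemma \ref{lem:flat-over-d} as the main input.

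First I would choose $P_\bullet \to A$ to be a resolution that is simultaneously $\Gamma$-projective (using Lemma \ref{lem:flat-over-d}(1)) and $A^e$-projective---such a joint resolution can be obtained, e.g., by a bar-type construction relative to the subring inclusion $A^e \hookrightarrow \Gamma$, or directly from the fact that $\Gamma \otimes_{A^e} -$ sends free $A^e$-modules to free $\Gamma$-modules. By Lemma \ref{lem:flat-over-d}(2), $B^e \otimes_\Gamma P_\bullet \to B$ is then a $B^e$-projective resolution of $B$, and the standard adjunction combined with Proposition \ref{prop:invariant} gives
\[
\Hom_{B^e}(B^e \otimes_\Gamma P_\bullet, N) = \Hom_\Gamma(P_\bullet, N) = \Hom_{A^e}(P_\bullet, N)^H,
\]
so that $\upH^\bullet(B, N)$ is computed by the cohomology of the $H$-invariants of $\Hom_{A^e}(P_\bullet, N)$. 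Let $Q_\bullet \to \Bbbk$ be a projective resolution in right $H$-modules and form the first-quadrant double complex
\[
C^{p, q} = \Hom_H(Q_p, \Hom_{A^e}(P_q, N))
\]
using the right $H$-action of Proposition \ref{prop:module-structure}.

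Taking cohomology vertically first, exactness of $\Hom_H(Q_p, -)$ combined with the $A^e$-projectivity of $P_\bullet$ gives $\Hom_H(Q_p, \upH^q(A, N))$ on the $E_1$-page; a further $p$-cohomology produces
\[
E_2^{p, q} = \upH^p(H, \upH^q(A, N)).
\]
Taking cohomology horizontally first, the aim is to show $\Hom_{A^e}(P_q, N)$ is $H$-acyclic, so that the $p$-cohomology collapses onto the $p = 0$ row to $\Hom_{A^e}(P_q, N)^H = \Hom_\Gamma(P_q, N)$; the remaining $q$-cohomology is then $\Ext^\bullet_\Gamma(A, N) = \upH^\bullet(B, N)$ by the identification above. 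First-quadrant boundedness ensures convergence of both spectral sequences to the cohomology of $\mathrm{Tot}(C^{\bullet, \bullet})$, yielding the cohomological abutment. The homological version is symmetric: with $\widetilde{Q}_\bullet \to \Bbbk$ a projective right $H$-module resolution, form $C_{p, q} = \widetilde{Q}_p \otimes_H (N \otimes_{A^e} P_q)$ using the left $H$-action of Proposition \ref{prop:module-structure}, and unpack both filtrations via Proposition \ref{prop:invariant} together with the flat base change $N \otimes_{B^e} B \cong N \otimes_\Gamma A$ that comes from Lemma \ref{lem:flat-over-d}.

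The key technical obstacle is the $H$-acyclicity of $\Hom_{A^e}(P_q, N)$ (and dually of $N \otimes_{A^e} P_q$) whenever $P_q$ is $\Gamma$-projective; by an additivity argument it is enough to treat $P_q = \Gamma$. Here the desired acyclicity should follow from the Hopf--Galois hypothesis and Lemma \ref{lem:h-gamma}: the $H$-action formulas of Proposition \ref{prop:module-structure}, specialised to $M = \Gamma$, should endow $\Hom_{A^e}(\Gamma, N)$ with a relatively injective---hence $H$-acyclic---module structure, exploiting the cofreeness of $B^e$ as an $H$-comodule guaranteed by the normal basis theorem for faithfully flat Hopf--Galois extensions. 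Rendering this compatibility with differentials explicit, and doing so without ever choosing an $H$-injective coresolution (which is where Stefan's original universal $\delta$-functor proof hides the work), is in my view the main piece of the argument.
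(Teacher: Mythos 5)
Your overall architecture --- a first-quadrant double complex mixing a projective $H$-resolution of $\Bbbk$ with data from the extension, whose two filtrations produce the $E_2$-page and the abutment respectively --- is the same as the paper's. But the paper keeps $A$ unresolved and instead takes an \emph{injective} resolution $I$ of $N$ over $B^e$, forming $C^{pq}=\Hom_H(P_p,\Hom_{A^e}(A,I^q))$; you dualize this by resolving $A$ projectively over $\Gamma$ and keeping $N$ fixed. That substitution creates two genuine gaps. First, the existence of a resolution $P_\bullet\to A$ that is simultaneously $\Gamma$-projective and $A^e$-projective is not automatic for a flat $H$-Galois extension: this is exactly Hypothesis I of Section \ref{sec:cup-semisimple}, which the paper treats as an extra assumption and verifies only in special situations (smash products, or when $\Gamma$ is $A^e$-projective). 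Neither of your suggested constructions produces one: the bar construction relative to $A^e\hookrightarrow\Gamma$ yields only \emph{relatively} projective terms of the form $\Gamma\otimes_{A^e}X$ with $X$ not $A^e$-projective (already $X=A$ fails), and applying $\Gamma\otimes_{A^e}-$ to an $A^e$-projective resolution of $A$ resolves $\Gamma\otimes_{A^e}A$, not $A$, and only under a flatness hypothesis on $\Gamma$ over $A^e$ that is not available. Since Theorem 3.5 assumes only flatness of $B/A$, your proof does not cover the stated generality. (Note also that without $\Gamma$-projectivity of the $P_q$ you cannot conclude that $B^e\otimes_\Gamma P_q$ is $B^e$-projective; the paper's alternative argument for this in Theorem \ref{thm:isomorphism-semisimple} uses semisimplicity of $H$, which is not assumed here.)

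Second, the collapse of the row filtration rests on the $H$-acyclicity of $\Hom_{A^e}(P_q,N)$, which you reduce to $\Ext^{>0}_H(\Bbbk,\Hom_{A^e}(\Gamma,N))=0$ for an \emph{arbitrary} bimodule $N$ and then leave as ``the main piece of the argument.'' This is a different statement from the one the paper needs; the paper's acyclicity input is that $\Hom_{A^e}(A,I^q)$ is $\Hom_H(\Bbbk,-)$-acyclic when $I^q$ is an injective $B^e$-module, which it imports directly from Stefan's Proposition 3.2 and which is available precisely because $I^q$ restricts to an injective $A^e$-module (restriction along the flat map $A^e\to B^e$ preserves injectives --- this is where flatness of $B/A$ enters the paper's proof). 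Your coinduction/cofreeness heuristic is plausible for smash products, where $\Gamma\cong A^e\rtimes H$ is free over $A^e$ with basis $H$, but for a general flat $H$-Galois extension it is unproven and is not a formal consequence of Lemma \ref{lem:h-gamma} or Proposition \ref{prop:module-structure}. As written, the proposal therefore replaces the one hard external input by a harder unproven one, on top of an existence assumption (Hypothesis I) that the theorem does not grant you.
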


\begin{proof}
	Let us prove the cohomology spectral sequence, and the homology version is omitted since the argument is similar.
	
	Choose a projective resolution $P$ of the right $H$-module $\Bbbk$, and an injective resolution $I$ of the $B^e$-module $N$. By Lemma \ref{lem:flat-over-d} (1), $A$ is a left $\Gamma$-module, and thus $\Hom_{A^e}(A, I)$ admits a right $H$-module structure, as stated in Proposition \ref{prop:module-structure}.  Consider the double complex $C^{\bullet\bullet}$ with $C^{pq}=\Hom_{H}(P_p, \Hom_{A^e}(A, I^q))$.
	
	Since $B$ is flat over $A$, $I^q$ viewed as an $A^e$-module is injective. So the first and second pages of the spectral sequence ${}^\mathrm{I}E^{\bullet\bullet}$ induced by column filtration of $C^{\bullet\bullet}$ are given by
	\[
	{}^\mathrm{I} E_1^{pq}=\Hom_{H}(P_p, \upH^q(A, N))\quad\text{and}\quad {}^\mathrm{I}E_2^{pq}=\mathrm{Ext}^p_{H}(\Bbbk, \upH^q(A, N))
	\]
	respectively. In order to compute the spectral sequence ${}^\mathrm{II}E^{\bullet\bullet}$ induced by row filtration, we employ the technical result that $\Hom_{A^e}(A, I^p)$ is $\Hom_H(\Bbbk, -)$-acyclic for all injective $B^e$-module $I^p$, proved in \cite[Proposition 3.2]{Ste95}. As a consequence,
	\[
	{}^\mathrm{II} E_1^{pq}=\mathrm{Ext}^q_{H}(\Bbbk, \Hom_{A^e}(A, I^p))=
	\begin{cases}
	\Hom_{H}(\Bbbk, \Hom_{A^e}(A, I^p)), & q=0, \\ 0, & q\neq 0.
	\end{cases}
	\]
	On the other hand, we have
	\begin{align*}
	\Hom_{H}(\Bbbk, \Hom_{A^e}(A, I^p))&=\Hom_{A^e}(A, I^p)^H=\Hom_{\Gamma}(A, I^p) \\
	&\cong \Hom_{B^e}(B^e\otimes_\Gamma A, I^p)\cong \Hom_{B^e}(B, I^p),
	\end{align*}
	by Proposition \ref{prop:invariant} and Lemma \ref{lem:flat-over-d} (2). Taking cohomology, we obtain
	\[
	{}^\mathrm{II} E_2^{pq}=
	\begin{cases}
	\upH^p(B, N), & q=0, \\ 0, & q\neq 0,
	\end{cases}
	\]
	which collapses on this page.
	
	Let us equip $\upH^q(A, N)$ with the trivial left $H$-module structure, and then
	\[
	{}^\mathrm{I}E_2^{pq}=\mathrm{Ext}^p_{H}(\Bbbk, \upH^q(A, N))\cong \upH^p(H, \upH^q(A, N)).
	\]
	Therefore, $\upH^p(H, \upH^q(A, N))\Longrightarrow \upH^{p+q}(B, N)$ is obtained.
\end{proof}

\section{Cup product of Hochschild cohomology in semisimple case}\label{sec:cup-semisimple}

In this section, let $B/A$ be a $H$-Galois extension where $H$ is semisimple. By \cite{KT81}, $B$ is finitely generated projective over $A$, and hence $B/A$ is a flat extension. In this case, Stefan's (cohomological version) spectral sequence yields a nice isomorphism $\upH^\bullet(B, N)\cong \upH^\bullet(A, N)^H$. The isomorphism will be constructed explicitly using an appropriate resolution.

\begin{lem}\label{lem:bialgeboid}
	Let $K$, $N$ be two left $\Gamma$-modules. Then $K\otimes_A N$ is also a left $\Gamma$-module.
\end{lem}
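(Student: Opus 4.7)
The statement reflects that $\Gamma$ carries the structure of a left $A$-bialgebroid in the Hopf--Galois setting (cf.\ \cite{CCMT07}), and that the category of left modules over such a bialgebroid is monoidal under $\otimes_A$ via the diagonal action. My plan is to make this diagonal action explicit using the Hopf--Galois data.

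For $\gamma = \sum_p x_p \otimes y_p \in \Gamma$, I would split $x_p$ along its $H$-degree using the translation map $h \mapsto \sum_i l_i(h) \otimes_A r_i(h)$, and propose
\[
\gamma \cdot (k \otimes_A n) := \sum_{p, i} \bigl(x_{p[0]} \otimes r_i(x_{p[1]})\bigr) \cdot k \,\otimes_A\, \bigl(l_i(x_{p[1]}) \otimes y_p\bigr) \cdot n,
\]
where each bracketed pair must, when summed appropriately, lie in $\Gamma$ so that it can act on $k$ and $n$ via the given $\Gamma$-module structures. The heart of the proof is the verification that both $\sum_i x_{p[0]} \otimes r_i(x_{p[1]})$ and $\sum_i l_i(x_{p[1]}) \otimes y_p$ satisfy the cotensor condition defining $\Gamma$. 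This amounts to a diagram chase combining \eqref{eq:galois-2} and \eqref{eq:galois-3} with the cotensor condition $\sum x_{p[0]} \otimes x_{p[1]} \otimes y_p = \sum x_p \otimes S^{-1} y_{p[1]} \otimes y_{p[0]}$ on $\gamma$; the argument is in the spirit of Lemma \ref{lem:h-gamma} and mirrors the computation of $z$ inside the proof of Proposition \ref{prop:invariant}.

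Once that is in hand, the remaining verifications are routine bookkeeping with the Hopf--Galois identities: well-definedness over $\otimes_A$ follows from \eqref{eq:galois-5} (which transports an element of $A$ across the translation map), the unit axiom follows from \eqref{eq:galois-1} together with $\beta^{-1}(1 \otimes 1) = 1 \otimes_A 1$, and the associativity of the action follows from \eqref{eq:galois-4}. I expect the main obstacle to be exactly the splitting step — showing that the proposed coproduct-like formula actually lands in $\Gamma \otimes_A \Gamma$ — because it is the only place where the full strength of the cotensor condition is needed and where the translation-map identities must be applied with some care.
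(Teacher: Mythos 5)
Your overall strategy is exactly the paper's: insert the translation map into the middle of $\sum_p x_p\otimes y_p$ to produce a ``comultiplication'' landing in $\Gamma\otimes_A\Gamma$, then act diagonally, with well-definedness from \eqref{eq:galois-5}, unitality from \eqref{eq:galois-1}, and associativity from \eqref{eq:galois-4}. However, your explicit formula has the two legs of the translation map transposed, and this breaks the argument at precisely the step you identify as the heart of the proof. Under the paper's convention $(x\otimes y)\cdot m=xmy$, your formula reads $\sum_{p,i}x_{p[0]}\,k\,r_i(x_{p[1]})\otimes_A l_i(x_{p[1]})\,n\,y_p$, so across the middle $\otimes_A$ one finds $\cdots r_i(h)\otimes_A l_i(h)\cdots$. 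But the translation map is only defined as the single element $\sum_i l_i(h)\otimes_A r_i(h)=\beta^{-1}(1\otimes h)\in B\otimes_A B$ in that order; the flipped expression $\sum_i r_i(h)\otimes_A l_i(h)$ does not descend to $B\otimes_A B$, so your action is not even well defined on $K\otimes_A N$. Moreover your bracketed pairs do not lie in $\Gamma$: in the smash-product case $x_p\otimes y_p=h_{(1)}\otimes Sh_{(2)}$ one has $l_i(x_{p[1]})=Sh_{(2)}$, $r_i(x_{p[1]})=h_{(3)}$, so your first pair is $\sum h_{(1)}\otimes h_{(3)}$, which fails the cotensor condition (the correct pair $\sum h_{(1)}\otimes Sh_{(2)}$ satisfies it). Unitality also fails, since only $\sum_i l_i(h)r_i(h)=\eps(h)$ is guaranteed by \eqref{eq:galois-1}, not $\sum_i r_i(h)l_i(h)=\eps(h)$.

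The correct decomposition is
\[
\gamma\cdot(k\otimes_A n)=\sum_{p,i}\bigl(x_{p[0]}\otimes l_i(x_{p[1]})\bigr)\cdot k\;\otimes_A\;\bigl(r_i(x_{p[1]})\otimes y_p\bigr)\cdot n
=\sum_{p,i}x_{p[0]}\,k\,l_i(x_{p[1]})\otimes_A r_i(x_{p[1]})\,n\,y_p,
\]
so that the canonical element $\sum_i l_i(x_{p[1]})\otimes_A r_i(x_{p[1]})$ straddles the middle $\otimes_A$ in its defining order. With this correction, the verification that $\sum_{p,i}x_{p[0]}\otimes l_i(x_{p[1]})\otimes_A r_i(x_{p[1]})\otimes y_p$ lies in $(B^e\otimes_A\Gamma)\cap(\Gamma\otimes_A B^e)=\Gamma\otimes_A\Gamma$ goes through exactly as you outline, using \eqref{eq:galois-2}, \eqref{eq:galois-3}, and the cotensor condition (rewriting the element via $S^{-1}y_{p[1]}$ for one of the two inclusions), and the remaining bookkeeping is as you describe; this is then identical to the paper's proof.
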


\begin{proof}
	Let $\sum_p x_p\otimes y_p\in \Gamma$. We have
	\[
	\mathfrak{r}:=\sum_{i,p}x_{p[0]}\otimes l_i(x_{p[1]})\otimes_A r_i(x_{p[1]})\otimes y_p \in B\otimes B\otimes_A B\otimes B^{\op}.
	\]
	Now regard the second tensor summand of $\mathfrak{r}$ as elements of $B^{\op}$, and thus $\mathfrak{r}\in B^e\otimes_A B^e$. Observe that by \eqref{eq:galois-2}, \eqref{eq:galois-3}, and the fact
	\[
	\mathfrak{r}=\sum_{j,p}x_{p}\otimes l_j(S^{-1}y_{p[1]})\otimes_A r_j(S^{-1}y_{p[1]})\otimes y_{p[0]},
	\]
	we have
	\begin{gather*}
	(I\otimes \gamma\otimes I)(\mathfrak{r})=(I\otimes I\otimes_A I\otimes \rho')(\mathfrak{r}), \\
	(\rho\otimes I\otimes_A I\otimes I)(\mathfrak{r})=(I\otimes \gamma'\otimes I)(\mathfrak{r}).
	\end{gather*}
	It follows that
	\begin{equation}\label{eq:frak-r}
	\mathfrak{r}\in (B^e\otimes_A \Gamma)\cap (\Gamma\otimes_A B^e)=\Gamma\otimes_A \Gamma.
	\end{equation}
	We define $K\times N\to K\otimes_A N$ by $(k, n)\mapsto \sum_{i,p}x_{p[0]}k l_i(x_{p[1]})\otimes_A r_i(x_{p[1]})n y_p$, which is $A$-balanced by \eqref{eq:galois-5}, so it induces a map $K\otimes_A N\to K\otimes_A N$ given by
	\[
	k\otimes_A n\mapsto \sum_{i,p}x_{p[0]}k l_i(x_{p[1]})\otimes_A r_i(x_{p[1]})n y_p.
	\]
	Furthermore, this gives rise to a $\Gamma$-module structure on $K\otimes_AN$ by \eqref{eq:galois-4}.
\end{proof}

Notice that $A$ is a $\Gamma$-module by Lemma \ref{lem:flat-over-d}. The canonical map $A\otimes_AA\cong A$ is an isomorphism of $\Gamma$-modules, not only of $A$-bimodules.

In order to investigate the formula of cup product, we need two hypotheses.

\begin{hypo}
There exists a left $\Gamma$-module resolution $K$ of $A$ such that $K_m$ is projective as an $A^e$-module for all $m$.
\end{hypo}

\begin{hypo}
	There exists a morphism $T\colon K\to K\otimes_AK$ of left $\Gamma$-module complexes which lifts the isomorphism $A\cong A\otimes_AA$.
\end{hypo}

\begin{thm}\label{thm:isomorphism-semisimple}
	Let $B/A$ be an $H$-Galois extension where $H$ is semisimple. Suppose that Hypothesis I holds true. Then for any $B$-bimodule $N$, there is an isomorphism $\upH^\bullet(A, N)^H\cong \upH^\bullet(B, N)$ induced by the isomorphism
	\[
	\Theta\colon\Hom_{A^e}(K, N)^H\To \Hom_{B^e}(B^e\otimes_\Gamma K, N)
	\]
	of complexes given by $\Theta(f)(b\otimes b'\otimes_\Gamma k)=bf(k)b'$. In particular, $\upH^\bullet(A, B)^H\cong \upHH^\bullet(B)$.
\end{thm}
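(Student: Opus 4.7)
The plan is to show that the map $\Theta$ is an isomorphism of complexes and that its two sides compute $\upH^\bullet(A,N)^H$ and $\upH^\bullet(B,N)$ respectively; the key technical input will be that $B^e\otimes_\Gamma K$ is a $B^e$-projective resolution of $B$.

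First I would observe that, since each $K_m$ is $A^e$-projective by Hypothesis I, $K\to A$ is automatically an $A^e$-projective resolution, so $\upH^\bullet(A,N)=H^\bullet(\Hom_{A^e}(K,N))$. By Proposition \ref{prop:module-structure} the complex $\Hom_{A^e}(K,N)$ carries a right $H$-action compatible with the differentials (the latter being $\Gamma$-linear), and Proposition \ref{prop:invariant}(2) identifies $\Hom_{A^e}(K,N)^H$ with $\Hom_\Gamma(K,N)$. Because $H$ is semisimple, a normalized integral gives an exact $\Bbbk$-linear averaging operator on $H$-modules, so $(-)^H$ commutes with cohomology:
\[
\upH^\bullet(A,N)^H \;=\; H^\bullet(\Hom_{A^e}(K,N))^H \;\cong\; H^\bullet\bigl(\Hom_{A^e}(K,N)^H\bigr).
\]

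The map $\Theta$ is simply the $(B^e\otimes_\Gamma-,\Hom_{B^e}(-,N))$ adjunction isomorphism $\Hom_{B^e}(B^e\otimes_\Gamma K_m,N)\cong\Hom_\Gamma(K_m,N)$ followed by the identification $\Hom_\Gamma(K_m,N)=\Hom_{A^e}(K_m,N)^H$ of Proposition \ref{prop:invariant}(2); its inverse sends $g$ to $k\mapsto g(1\otimes 1\otimes_\Gamma k)$, and compatibility with differentials is automatic from $B^e$-linearity. Thus $\Theta$ is an isomorphism of complexes as stated.

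The remaining and, I expect, principal step is to prove that $B^e\otimes_\Gamma K$ is a $B^e$-projective resolution of $B$, so that $\upH^\bullet(B,N)=H^\bullet(\Hom_{B^e}(B^e\otimes_\Gamma K,N))$. Exactness is easy: by Lemma \ref{lem:flat-over-d}(2), $B^e$ is flat as a right $\Gamma$-module and $B^e\otimes_\Gamma A\cong B$, so applying $B^e\otimes_\Gamma(-)$ to the resolution $K\to A\to 0$ yields an acyclic complex augmented to $B$. For $B^e$-projectivity of each $B^e\otimes_\Gamma K_m$, I would show that the functor
\[
\Hom_{B^e}(B^e\otimes_\Gamma K_m,-)\;\cong\;\Hom_{A^e}(K_m,-)^H
\]
on $B^e$-modules is exact, by factoring it as $\Hom_{A^e}(K_m,-)$ (exact by $A^e$-projectivity of $K_m$, and naturally valued in $H$-modules via Proposition \ref{prop:module-structure}) followed by $(-)^H$ (exact by semisimplicity of $H$). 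This is precisely where the semisimplicity hypothesis is genuinely used. Combining everything, $\Theta$ induces the desired cohomology isomorphism $\upH^\bullet(B,N)\cong\upH^\bullet(A,N)^H$, and specialization to $N=B$ yields the final statement.
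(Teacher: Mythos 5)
Your proposal is correct and follows essentially the same route as the paper's proof: identify $\Hom_{A^e}(K,N)^H$ with $\Hom_\Gamma(K,N)\cong\Hom_{B^e}(B^e\otimes_\Gamma K,N)$ via Proposition \ref{prop:invariant}, use semisimplicity of $H$ to commute $(-)^H$ with cohomology, and establish that $B^e\otimes_\Gamma K$ is a $B^e$-projective resolution of $B$ by combining Lemma \ref{lem:flat-over-d}(2) for exactness with the factorization $\Hom_{B^e}(B^e\otimes_\Gamma K_m,-)\cong(-)^H\circ\Hom_{A^e}(K_m,-)$ for projectivity. No gaps; the argument matches the paper step for step.
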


\begin{proof}
	Obviously, $\Theta$ is composed as follows
	\[
	\Hom_{A^e}(K, N)^H= \Hom_{\Gamma}(K, N)\cong \Hom_{B^e}(B^e\otimes_\Gamma K, N)
	\]
	by Proposition \ref{prop:invariant}. Since $H$ is semisimple, taking $H$-invariants is an exact functor. Hence for all $m$,
	\[
	\upH^m(\Hom_{A^e}(K, N)^H)\cong \upH^m(\Hom_{A^e}(K, N))^H=\upH^m(A, N)^H.
	\]
	It suffices to show that $B^e\otimes_\Gamma K$ is a projective resolution of $B$ over $B^e$. To this end, we see that $B^e\otimes_\Gamma K\to B\to 0$ is exact by Lemma \ref{lem:flat-over-d} (2). Furthermore, $\Hom_{B^e}(B^e\otimes_\Gamma K_m, -)$ is naturally isomorphic to $(-)^H\circ \Hom_{A^e}(K_m, -)$ by Proposition \ref{prop:invariant}, and the latter is exact. Thus $B^e\otimes_\Gamma K_m$ is a projective $B^e$-module.
\end{proof}

In the case $N=B$, the isomorphism in Theorem \ref{thm:isomorphism-semisimple} is $\upH^\bullet(A, B)^H\cong \upHH^\bullet(B)$. Notice that there are cup products on both $\upHH^\bullet(B)$ and $\upH^\bullet(A, B)$. Furthermore, the cup products can be computed via an arbitrary projective resolution (see Sec.~\ref{sec:preliminaries}). We plan to show that $\Theta^{-1}$ preserves the cup products at the level of complex, and consequently, $\upH^\bullet(A, B)^H$ has a cup product structure inherited from $\upH^\bullet(A, B)$, which coincides with the one of $\upHH^\bullet(B)$.

Denote $\KK=B^e\otimes_\Gamma K$. Under Hypotheses I and II, we can construct a morphism $\TT\colon \KK\to \KK\otimes_B\KK$ of $B$-bimodule complexes which lifts the canonical isomorphism  $B\cong B\otimes_BB$. Let us introduce the construction.

First of all, since $T$ is a morphism of complexes, we have $T\circ d_K=d_{K\otimes K}\circ T$. If we write $T(k)=\sum k_{\langle 1\rangle}\otimes_A k_{\langle 2\rangle}$ for any chain $k$ of $K$, then
\[
\sum d_K(k)_{\langle 1\rangle}\otimes_A d_K(k)_{\langle 2\rangle}=\sum d_K(k_{\langle 1\rangle})\otimes_A k_{\langle 2\rangle}+(-1)^{|k_{\langle 1\rangle}|}k_{\langle 1\rangle}\otimes_A d_K(k_{\langle 2\rangle}).
\]
For simplicity, the symbol $\sum$  is often suppressed if no confusion arises. Moreover, since $T$ preserves the $\Gamma$-action, we have
\[
\sum_p (x_pky_p)_{\langle 1\rangle}\otimes_A (x_pky_p)_{\langle 2\rangle}=\sum_{i,p} x_{p[0]}k_{\langle 1\rangle}l_i(x_{p[1]})\otimes_A r_i(x_{p[1]})k_{\langle 2\rangle}y_p
\]
for all $\sum_px_p\otimes y_p\in \Gamma$, by (the proof of) Lemma \ref{lem:bialgeboid}.

Secondly, for each $m\geq 0$, we define a map $\widetilde{\TT}_m\colon B^e\times K_m\to (\KK\otimes_B\KK)_m$ by
\[
\widetilde{\TT}_m(b\otimes b', k)=(b\otimes 1\otimes_\Gamma k_{\langle 1\rangle})\otimes_B (1\otimes b'\otimes_\Gamma k_{\langle 2\rangle}).
\]
In order to obtain $\TT_m\colon  \KK_m\to (\KK\otimes_B\KK)_m$, it suffices to show that $\widetilde{\TT}_m$ is $\Gamma$-balanced. In fact, for any $\sum_p x_p\otimes y_p\in \Gamma$, we have
\begin{align*}
\widetilde{\TT}_m\biggl((b\otimes b')\sum_p x_p\otimes y_p, k\biggr)&=\sum_p\widetilde{\TT}_m(bx_p\otimes y_pb', k)\\
&=\sum_p(bx_p\otimes 1\otimes_\Gamma k_{\langle 1\rangle})\otimes_B(1\otimes y_p b'\otimes_\Gamma k_{\langle 2\rangle}),
\end{align*}
and
\begin{align*}
\widetilde{\TT}_m\biggl(b\otimes b', &\sum_p x_pk y_p\biggr)=\sum_p(b\otimes 1\otimes_\Gamma (x_pk y_p)_{\langle 1\rangle})\otimes_B(1\otimes b'\otimes_\Gamma(x_pk y_p)_{\langle 2\rangle})\\
&=\sum_{i,p} (b\otimes 1\otimes_\Gamma x_{p[0]}k_{\langle 1\rangle}l_i(x_{p[1]}))\otimes_B (1\otimes b'\otimes_\Gamma r_i(x_{p[1]})k_{\langle 2\rangle}y_p) \\
&\overset{\dagger}{=}\sum_{i,p} (bx_{p[0]}\otimes l_i(x_{p[1]})\otimes_\Gamma k_{\langle 1\rangle})\otimes_B (r_i(x_{p[1]})\otimes y_pb'\otimes_\Gamma k_{\langle 2\rangle}) \\
&=\sum_{i,p} (bx_{p[0]}\otimes l_i(x_{p[1]})r_i(x_{p[1]})\otimes_\Gamma k_{\langle 1\rangle})\otimes_B ( 1\otimes y_pb'\otimes_\Gamma k_{\langle 2\rangle}) \\
&=\sum_{p} (bx_{p[0]}\otimes \eps(x_{p[1]})\otimes_\Gamma k_{\langle 1\rangle})\otimes_B ( 1\otimes y_pb'\otimes_\Gamma k_{\langle 2\rangle}) \\
&=\sum_{p} (bx_{p}\otimes 1\otimes_\Gamma k_{\langle 1\rangle})\otimes_B ( 1\otimes y_pb'\otimes_\Gamma k_{\langle 2\rangle}),
\end{align*}
where the equation marked by $\dagger$ holds true by \eqref{eq:frak-r}. So $\widetilde{\TT}_m$ induces a homomorphism $\TT_m\colon  \KK_m\to (\KK\otimes_B\KK)_m$ of $B$-bimodules.

Next, let us check that all $\TT_m$'s constitute a morphism $\TT\colon \KK\to \KK\otimes_B\KK$ of $B$-bimodule complexes. To see this, we have to prove that $\TT$ commutes with the differentials. In fact, 
\begin{align*}
(\TT\circ{} & d_{\KK} )(b\otimes b'\otimes_\Gamma k)=\TT(b\otimes b'\otimes_\Gamma d_K(k)) \\
&=(b\otimes 1\otimes_\Gamma d_K(k)_{\langle 1\rangle})\otimes_B (1\otimes b'\otimes_\Gamma d_K(k)_{\langle 2\rangle}) \\
&=(b\otimes 1\otimes_\Gamma d_K(k_{\langle 1\rangle}))\otimes_B (1\otimes b'\otimes_\Gamma k_{\langle 2\rangle}) \\
&\mathrel{\phantom{=}}{}+(-1)^{|k_{\langle 1\rangle}|}(b\otimes 1\otimes_\Gamma k_{\langle 1\rangle})\otimes_B (1\otimes b'\otimes_\Gamma d_K(k_{\langle 2\rangle})) \\
&=d_{\KK}(b\otimes 1\otimes_\Gamma k_{\langle 1\rangle})\otimes_B (1\otimes b'\otimes_\Gamma k_{\langle 2\rangle}) \\
&\mathrel{\phantom{=}}{}+(-1)^{|b\otimes 1\otimes_\Gamma k_{\langle 1\rangle}|}(b\otimes 1\otimes_\Gamma k_{\langle 1\rangle})\otimes_B d_{\KK}(1\otimes b'\otimes_\Gamma k_{\langle 2\rangle}) \\
&=d_{\KK\otimes_B\KK}\bigl( (b\otimes 1\otimes_\Gamma k_{\langle 1\rangle})\otimes_B (1\otimes b'\otimes_\Gamma k_{\langle 2\rangle}) \bigr) \\
&=(d_{\KK\otimes_B\KK}\circ \TT)(b\otimes b'\otimes_\Gamma k).
\end{align*}

The last step is to prove that $\TT$ lifts to the canonical isomorphism $B\cong B\otimes_BB$. This is similar to the proof of $\TT\circ d_{\KK} =d_{\KK\otimes_B\KK}\circ \TT$, so we omit it.

\begin{thm}\label{thm:cup-product-semisimple}
	Let $B/A$ be an $H$-Galois extension where $H$ is semisimple. Suppose that Hypotheses I and II hold true. Then  the isomorphism $\upH^\bullet(A, B)^H\cong \upHH^\bullet(B)$ in Theorem \ref{thm:isomorphism-semisimple} preserves cup products.
\end{thm}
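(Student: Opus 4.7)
The plan is to show that the chain map $\Theta\colon \Hom_{A^e}(K, B)^H \to \Hom_{B^e}(\KK, B)$ of Theorem \ref{thm:isomorphism-semisimple} intertwines the cup products when these are computed, respectively, via the pair $(K, T)$ from Hypothesis II and the pair $(\KK, \TT)$ built immediately before the present theorem. Since cup products on $\upH^\bullet(A, B)$ and on $\upHH^\bullet(B)$ are independent of the choice of projective resolution and comparison map, this chain-level equality will yield the desired compatibility on cohomology. The ingredients are already in place: by Hypothesis I, $K$ is a projective $A^e$-resolution of $A$ and thus $(K, T)$ legitimately computes $\smallsmile$ on $\upH^\bullet(A, B)$; by the proof of Theorem \ref{thm:isomorphism-semisimple} and the construction preceding the present theorem, $\KK$ is a projective $B^e$-resolution of $B$ and $\TT$ is a $B^e$-linear chain map lifting $B \cong B \otimes_B B$, so $(\KK, \TT)$ legitimately computes $\smallsmile$ on $\upHH^\bullet(B)$.

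The core of the proof is then a direct computation at the chain level. Writing $T(k) = \sum k_{\langle 1\rangle} \otimes_A k_{\langle 2\rangle}$ and taking $f \in \Hom_{A^e}(K_m, B)^H$, $g \in \Hom_{A^e}(K_n, B)^H$, one has $(f \smallsmile g)(k) = f(k_{\langle 1\rangle}) g(k_{\langle 2\rangle})$, whence
\[
\Theta(f \smallsmile g)(b \otimes b' \otimes_\Gamma k) = b f(k_{\langle 1\rangle}) g(k_{\langle 2\rangle}) b'.
\]
On the other side, the formula for $\TT$ yields
\begin{align*}
(\Theta(f) \smallsmile \Theta(g))(b \otimes b' \otimes_\Gamma k) &= \mu\bigl(\Theta(f)(b \otimes 1 \otimes_\Gamma k_{\langle 1\rangle}) \otimes_B \Theta(g)(1 \otimes b' \otimes_\Gamma k_{\langle 2\rangle})\bigr) \\
&= b f(k_{\langle 1\rangle}) g(k_{\langle 2\rangle}) b',
\end{align*}
agreeing with the preceding display. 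Hence $\Theta$ is multiplicative at the chain level, and passing to cohomology delivers the result.

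All the substantive work has already been done before the theorem statement, where the hard step was to verify that $\widetilde\TT_m$ is $\Gamma$-balanced (the key ingredient being the identity \eqref{eq:frak-r} together with the Galois identities \eqref{eq:galois-1}--\eqref{eq:galois-5}) and that the resulting $\TT$ commutes with the differentials and lifts the canonical isomorphism $B \cong B \otimes_B B$. Granting that construction, the cup-product compatibility of $\Theta$ reduces to the trivial observation that $B$-bimodule multiplication is associative, so Theorem \ref{thm:cup-product-semisimple} follows with no further calculation required.
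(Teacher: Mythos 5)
Your proposal is correct and is essentially the paper's own argument: the paper performs the identical chain-level computation, merely phrased for $\Theta^{-1}$ (composed with the inclusion $\Hom_{A^e}(K,B)^H\hookrightarrow\Hom_{A^e}(K,B)$) rather than for $\Theta$. The only caveat is that writing $\Theta(f\smallsmile g)$ presupposes that $f\smallsmile g$ is again $H$-invariant (i.e.\ $\Gamma$-linear), which is why the paper argues in the $\Theta^{-1}$ direction and deduces closedness of the invariants under $\smallsmile$ as a consequence; your computation yields the same conclusion once reread as $f\smallsmile g=\Theta^{-1}(\Theta(f)\smallsmile\Theta(g))$.
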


\begin{proof}
	Let $f$, $g$ be two cochains of $\Hom_{A^e}(K, B)$. Recall that the cup product $f\smallsmile g$ is defined as $\mu\circ(f\otimes_A g)\circ T$ where $\mu$ is the multiplication of $B$. On elements, we have $(f\smallsmile g)(k)=f(k_{\langle 1\rangle})g(k_{\langle 2\rangle})$.  Similarly the cup product $\tilde{f}\smallsmile\tilde{g}$ of two cochains $\tilde{f}$, $\tilde{g}$ of $\Hom_{B^e}(\KK, B)$ is defined as $\mu\circ(\tilde{f}\otimes_B\tilde{g})\circ\TT$.
	
	Notice that we have an isomorphism $\Theta^{-1}\colon \Hom_{B^e}(\KK, B)\to \Hom_{A^e}(K, B)^H$ given by $\Theta^{-1}(\tilde{f})(k)=\tilde{f}(1\otimes 1\otimes_\Gamma k)$. Let $\Psi$ be the composition of $\Theta^{-1}$ and the embedding $\Hom_{A^e}(K, B)^H\hookrightarrow \Hom_{A^e}(K, B)$. We claim that $\Psi$ preserves cup products. 	In fact, $\Psi(\tilde{f}\smallsmile\tilde{g})(k)=(\tilde{f}\smallsmile\tilde{g})(1\otimes 1\otimes_\Gamma k)=\tilde{f}(1\otimes 1\otimes_\Gamma k_{\langle 1\rangle})\tilde{g}(1\otimes 1\otimes_\Gamma k_{\langle 2\rangle})=\Psi(\tilde{f})(k_{\langle 1\rangle})\Psi(\tilde{g})(k_{\langle 2\rangle})=(\Psi(\tilde{f})\smallsmile\Psi(\tilde{g}))(k)$.
	
	It follows that $\Hom_{A^e}(K, B)^H$ is closed under taking cup product. Descending to cohomology, the isomorphism $\upH^\bullet(A, B)^H\cong \upHH^\bullet(B)$ preserves cup products, as desired.
\end{proof}

\begin{rem}
When $\Gamma$ is a projective $A^e$-module, Hypotheses I and II are always satisfied because $K$ can be chosen as any projective resolution of $A$ over $\Gamma$.
\end{rem}

\section{Hochschild cohomology of smash products over Koszul algebras}\label{sec:hoch-smash-koszul}

In this section, let us consider the special situation that $A$ is a Koszul algebra and $B=A\# H$ where $H$ is a finite dimensional semisimple Hopf algebra, acting on $A$ homogeneously. In this case, $\sum_il_i(h)\otimes_A r_i(h)=\sum(1\# Sh_{(1)})\otimes_A(1\# h_{(2)})$, and $\Gamma$ is spanned by $\sum(r\# h_{(1)})\otimes ((Sh_{(3)}\triangleright r')\# Sh_{(2)})$ for all $r$, $r'\in A$ and $h\in H$. For simplicity, we omit $\sum$ and $\#$ in the above expressions. Thus $\sum(r\# h_{(1)})\otimes ((Sh_{(3)}\triangleright r')\# Sh_{(2)})$ will be written as $rh_{(1)}\otimes Sh_{(2)}r'$.

Hypotheses I and II are always true for smash products since we can choose $K$ to be the bar resolution $B(A)$ and $T$ to be the morphism induced by the comultiplication of $\mathsf{T}^c(A)$ (cf.~\cite{Vol}). However, the bar resolution is too big, and the Koszul complex will be a good alternative to help us compute cohomological groups. Hence, our main aim in this section is to verify that Hypotheses I and II are fulfilled for any Koszul algebra $A$ and the Koszul complex $K(A)$.

Since $H$ acts on $A$ homogeneously, each $h\in H$ can be regarded as a linear map on $A_1=V$, sending $v\in V$ to $h\triangleright v$. Let $h$ act on $V^{\otimes m}$ diagonally, namely, $h\triangleright(v_1\otimes \cdots\otimes v_m)= h_{(1)}\triangleright v_1\otimes \cdots\otimes h_{(m)}\triangleright v_m$, so $V^{\otimes m}$ becomes an $H$-module. Since $A$ is an $H$-module algebra, we have $h\triangleright R\subseteq R$ and hence $(A^!_m)^*$ is an $H$-submodule of $V^{\otimes m}$, by \eqref{eq:koszul-embedding}. Define $\Gamma\times K(A)_m\to K(A)_m$ by
\begin{equation}\label{eq:koszul-gamma-module}
(rh_{(1)}\otimes Sh_{(2)}r', a\otimes \alpha\otimes a')\mapsto r(h_{(1)}\triangleright a)\otimes h_{(2)}\triangleright\alpha\otimes (h_{(3)}\triangleright a')r'.
\end{equation}
This makes $K(A)_m$ into a left $\Gamma$-module. Furthermore, it follows from \cite[Proposition 2.1]{LWZ12} that $K(A)$ is a left $\Gamma$-module complex, that is, Hypothesis I is true.

As we mentioned in the beginning of this section, both hypotheses are true for the bar complex $B(A)$ and the morphism $T\colon B(A)\to B(A)\otimes_A B(A)$ defined as
\[
T(a_0\otimes a_1\otimes \cdots\otimes a_{m+1})=\sum_{u=0}^{m}(a_0\otimes \cdots\otimes a_{u}\otimes 1)\otimes_A(1\otimes a_{u+1}\otimes \cdots\otimes a_{m+1}).
\]
By \eqref{eq:koszul-embedding}, we obtain a well-defined morphism $K(A)\to K(A)\otimes_A K(A)$ by restricting $T$ to $K(A)$. The morphism is also denoted by $T$, by abuse of notations. As a result, Hypothesis II is true. Furthermore, $T$ satisfies the cocommutative law in the sense of $(T\otimes_A I)\circ T=(I\otimes_A T)\circ T\colon K(A)\to K(A)\otimes_A K(A)\otimes_A K(A)$. It follows that $\Hom_{A^e}(K(A), B)$ equipped with the cup product is a differential graded algebra.

\begin{rem}
	Without the Koszul assumption, $\Hom_{A^e}(K, B)$ fails to be a differential graded algebra, even if Hypotheses I and II are fulfilled. Instead, $\Hom_{A^e}(K, B)$ is an $\mathrm{A}_\infty$-algebra.
\end{rem}

We state the following proposition, which is easily obtained from Theorem \ref{thm:cup-product-semisimple}.

\begin{prop}\label{prop:koszul-cup-1}
	Let $A$ be a Koszul algebra, and $H$ be a finite dimensional semisimple Hopf algebra. Suppose that $H$ acts on $A$ homogeneously, and that $A$ is an $H$-module algebra. Then  the isomorphism $\upH^\bullet(A, A\#H)^H\cong \upHH^\bullet(A\#H)$ in Theorem \ref{thm:isomorphism-semisimple} preserves cup products.
\end{prop}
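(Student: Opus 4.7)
My plan is to deduce the proposition directly from Theorem \ref{thm:cup-product-semisimple} by verifying Hypotheses I and II for the choice $K = K(A)$ and the comultiplication-style comparison map $T$ inherited from the bar complex. Since $B=A\#H$ and $H$ is finite dimensional semisimple, $B/A$ is automatically an $H$-Galois extension to which Theorem \ref{thm:cup-product-semisimple} applies once the two hypotheses are in place.

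First, for Hypothesis I, I would observe that each component $K(A)_m = A\otimes (A^!_m)^*\otimes A$ is a free $A^e$-module, so projectivity is automatic. The $\Gamma$-action defined by \eqref{eq:koszul-gamma-module} is well defined because $(A^!_m)^*\subseteq V^{\otimes m}$ is $H$-stable by \eqref{eq:koszul-embedding} (since $h\triangleright R\subseteq R$). That this action commutes with the Koszul differential $d_K$, turning $K(A)$ into a complex of left $\Gamma$-modules, is exactly \cite[Proposition~2.1]{LWZ12}, which I would cite rather than reprove. Since $A$ itself is a $\Gamma$-module by Lemma \ref{lem:flat-over-d}, this gives a $\Gamma$-module resolution of $A$, confirming Hypothesis I.

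For Hypothesis II, I would take the standard comultiplicative comparison map $T_{\mathrm{bar}}\colon B(A)\to B(A)\otimes_A B(A)$ sending $a_0\otimes a_1\otimes\cdots\otimes a_{m+1}$ to $\sum_{u=0}^{m}(a_0\otimes\cdots\otimes a_u\otimes 1)\otimes_A (1\otimes a_{u+1}\otimes\cdots\otimes a_{m+1})$, and restrict it along the inclusion $K(A)\hookrightarrow B(A)$ described at \eqref{eq:koszul-embedding}. The key point is that this restriction lands in $K(A)\otimes_A K(A)$; this follows from the standard fact that for a quadratic algebra, the image of $(A^!_m)^*\subseteq V^{\otimes m}$ under deconcatenation lies in $(A^!_u)^*\otimes (A^!_{m-u})^*$, which is immediate from the intersection description \eqref{eq:koszul-embedding} (each wedge-factor $V^{\otimes i}\otimes R\otimes V^{\otimes j}$ occurring in $(A^!_m)^*$ either sits on the left or on the right of the split). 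Granted this, the resulting map $T\colon K(A)\to K(A)\otimes_A K(A)$ lifts $A\cong A\otimes_A A$ and commutes with $d_K$ because $T_{\mathrm{bar}}$ does so with $\delta$. The remaining issue is $\Gamma$-linearity of $T$, but this is forced by the diagonal nature of the $\Gamma$-action in \eqref{eq:koszul-gamma-module}: on an element $a\otimes \alpha\otimes a'$ with $\alpha\in (A^!_m)^*\subseteq V^{\otimes m}$, both sides of the equation $T(\gamma\cdot x)=\gamma\cdot T(x)$ for $\gamma=rh_{(1)}\otimes Sh_{(2)}r'$ unfold via the diagonal $H$-action on $V^{\otimes m}$ and the coassociativity $\Delta^{(m-1)}$, and the two sides match summand by summand under deconcatenation.

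The heart of the argument is thus bundled in the two verifications above; I expect the genuine obstacle to be the second one, namely checking compatibility of the restricted $T$ with the $\Gamma$-action --- this is the one place where the homogeneous hypothesis on the $H$-action is really used, since only then does the diagonal $H$-action on $V^{\otimes m}$ restrict to $(A^!_m)^*$. Once Hypotheses I and II are in place, Theorem \ref{thm:cup-product-semisimple} applies verbatim and produces the cup-product-preserving isomorphism $\upH^\bullet(A, A\#H)^H\cong \upHH^\bullet(A\#H)$, which is the claim.
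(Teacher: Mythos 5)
Your proposal is correct and follows essentially the same route as the paper: both verify Hypothesis I via the $\Gamma$-action \eqref{eq:koszul-gamma-module} on $K(A)$ (citing \cite[Proposition 2.1]{LWZ12} for compatibility with $d_K$) and Hypothesis II by restricting the comultiplicative comparison map on the bar complex to the subcomplex $K(A)$ using \eqref{eq:koszul-embedding}, and then invoke Theorem \ref{thm:cup-product-semisimple}. If anything, you are slightly more explicit than the paper about why the restricted $T$ lands in $K(A)\otimes_A K(A)$ and why it is $\Gamma$-linear, which the paper leaves implicit.
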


In order to compute the cup product on $\upHH^\bullet(A\#H)$ explicitly, we have to determine the cup product on $\Hom_{A^e}(K(A), A\# H)$. Notice that the composition
\[
\Phi^m\colon A_m^!\otimes (A\# H)  \cong \Hom((A^!_m)^*, A\# H)\cong \Hom_{A^e}(K(A)_m, A\# H)
\]
is an isomorphism of vector spaces, in which $\Phi^m( \xi\otimes b)\colon K(A)_m\to A\# H$ is given by for any $\xi\in A^!$ and $b\in A\#H$,
\[
\Phi^m(\xi\otimes b)(a\otimes\alpha\otimes a')=\alpha(\xi)aba'.
\]
Let us write the $m$-th differential $(d_K^*)^m$ of $\Hom_{A^e}(K(A), A\# H)$ as $d^m$ for short. All $\Phi^m$'s constitute an isomorphism $A^!\otimes (A\# H) \cong \Hom_{A^e}(K(A), A\# H)$ of complexes, if the differential $\partial$ of $A^!\otimes (A\# H)$ is defined by $\partial^m=(\Phi^{m+1})^{-1}\circ d^m\circ \Phi^m$.

Suppose $\dim A^!_m=\kappa_m$. Let $\alpha^{m1},\ldots, \alpha^{m\kappa_m}$ be a basis of $A^!_m$, and $\alpha_{m1},\ldots, \alpha_{m\kappa_m}$ be the dual basis of $(A^!_m)^*$. 
Thus, $(\Phi^m)^{-1}$ is given by
\[
(\Phi^m)^{-1}(f)=\sum_{j=1}^{\kappa_m} \alpha^{mj}\otimes f(1\otimes \alpha_{mj}\otimes 1).
\]
Consequently,
\begin{align*}
\partial^m&(\xi\otimes b)=(\Phi^{m+1})^{-1}(d^m\circ \Phi^m(\xi\otimes b) ) \\
&=\sum_{j=1}^{\kappa_{m+1}} \alpha^{m+1,j}\otimes d^m\circ \Phi^m(\xi\otimes b)(1\otimes \alpha_{m+1,j}\otimes 1) \\
&=\sum_{j=1}^{\kappa_{m+1}} \alpha^{m+1,j}\otimes \Phi^m(\xi\otimes b)(d_K^{m+1}(1\otimes \alpha_{m+1,j}\otimes 1))  \\
&=\sum_{j=1}^{\kappa_{m+1}}\sum_{i=1}^n \bigl(\alpha^{m+1,j} \otimes \Phi^m(\xi\otimes b)(e_i\otimes \alpha_{m+1,j}e^i\otimes 1) \\
&\mathrel{\phantom{=}} {}-(-1)^m \alpha^{m+1,j}\otimes \Phi^m(\xi\otimes b)(1\otimes e^i\alpha_{m+1,j}\otimes e_i) \bigr) \\
&=\sum_{j=1}^{\kappa_{m+1}}\sum_{i=1}^n \bigl(\alpha^{m+1,j} \otimes (\alpha_{m+1,j}e^i)(\xi)e_ib
- (-1)^m \alpha^{m+1,j} \otimes (e^i\alpha_{m+1,j})(\xi)be_i \bigr) \\
&=\sum_{j=1}^{\kappa_{m+1}}\sum_{i=1}^n \bigl(\alpha^{m+1,j} \otimes\alpha_{m+1,j}(e^i\xi)e_ib- (-1)^m \alpha^{m+1,j} \otimes\alpha_{m+1,j}(\xi e^i)be_i \bigr) \\
&=\sum_{i=1}^n \sum_{j=1}^{\kappa_{m+1}} \bigl( \alpha_{m+1,j}(e^i\xi)\alpha^{m+1,j} \otimes e_ib - (-1)^m \alpha_{m+1,j}(\xi e^i)\alpha^{m+1,j}\otimes be_i  \bigr) \\
&=\sum_{i=1}^n \bigl(e^i\xi\otimes e_ib - (-1)^m \xi e^i\otimes be_i \bigr).
\end{align*}

We have concluded that $A^!\otimes (A\# H)\cong\Hom_{A^e}(K(A), A\# H)$ as complexes. Furthermore, we will show that they are isomorphic as differential graded algebras. Let us check that $A^!\otimes (A\# H)$ is a differential graded algebra. For any elements $\xi\otimes b\in A^!_m\otimes (A\# H)$ and $\xi'\otimes b'\in A^!_{m'}\otimes (A\# H)$, we have
\begin{align*}
\partial((\xi &\otimes b)(\xi'\otimes b'))= \partial(\xi\xi'\otimes bb') \\
&=\sum_{i=1}^n \bigl(e^i\xi\xi'\otimes e_ibb' - (-1)^{m+m'} \xi\xi'e^i\otimes bb'e_i \bigr),
\end{align*}
and
\begin{align*}
\partial(\xi &\otimes b)(\xi'\otimes b')+(-1)^m(\xi \otimes b)\partial(\xi'\otimes b') \\
&=\sum_{i=1}^n \bigl(e^i\xi\otimes e_ib- (-1)^m \xi e^i\otimes be_i \bigr)(\xi'\otimes b') \\
&\mathrel{\phantom{=}} {}+(-1)^m(\xi \otimes b)\sum_{i=1}^n \bigl(e^i\xi'\otimes e_ib'- (-1)^{m'} \xi'e^i\otimes b'e_i \bigr) \\
&=\sum_{i=1}^n \bigl(e^i\xi\xi'\otimes e_ibb' - (-1)^m \xi e^i\xi'\otimes be_ib' \\
&\mathrel{\phantom{=}} {}+(-1)^m \xi e^i\xi'\otimes be_ib' - (-1)^{m+m'} \xi\xi'e^i\otimes bb'e_i \bigr) \\
&=\sum_{i=1}^n \bigl(e^i\xi\xi'\otimes e_ibb' - (-1)^{m+m'} \xi\xi'e^i\otimes bb'e_i \bigr).
\end{align*}
Therefore, $\partial((\xi \otimes b)(\xi'\otimes b'))=\partial(\xi \otimes b)(\xi'\otimes b')+(-1)^m(\xi \otimes b)\partial(\xi'\otimes b')$ holds true, forcing $A^!\otimes (A\#H)$ to be a differential graded algebra.

As normal, for any  differential graded algebra $\mathfrak{A}$, denote by $\upH^\bullet(\mathfrak{A})$ the cohomological algebra  of $\mathfrak{A}$.

\begin{thm}\label{thm:hochschild-cohomology-1}
	Let $A$ and $H$ be as above. Then $A^!\otimes (A\# H)\cong\Hom_{A^e}(K(A), A\# H)$ as differential graded algebras. As a consequence, $\upH^\bullet(A, A\# H)$ equipped with the cup product, as a graded algebra, is isomorphic to $\upH^\bullet(A^!\otimes (A\#H))$.
\end{thm}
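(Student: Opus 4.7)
The plan is to show that the vector-space isomorphism $\Phi=\bigoplus_m \Phi^m$ is compatible with both the differentials and the multiplications, so that it becomes an isomorphism of differential graded algebras; the graded-algebra statement on cohomology then follows by passing to $\upH^\bullet$. Compatibility with differentials is built into the definition $\partial^m=(\Phi^{m+1})^{-1}\circ d^m\circ\Phi^m$, and the Leibniz rule for $\partial$ on $A^!\otimes (A\#H)$ has just been verified. Moreover $\Hom_{A^e}(K(A),A\#H)$ is a strict DG algebra under $\smallsmile$ because the comparison $T$ inherited from the bar complex satisfies $(T\otimes_A I)\circ T=(I\otimes_A T)\circ T$ on $K(A)$. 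Consequently the only remaining point is multiplicativity of $\Phi$, namely
\[
\Phi^m(\xi\otimes b)\smallsmile \Phi^{m'}(\xi'\otimes b')=\Phi^{m+m'}(\xi\xi'\otimes bb')
\]
for $\xi\in A^!_m$, $\xi'\in A^!_{m'}$, and $b,b'\in A\#H$.

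I would evaluate both sides on a typical generator $1\otimes\alpha\otimes 1\in K(A)_{m+m'}$ with $\alpha\in (A^!_{m+m'})^*$. The right-hand side is simply $\alpha(\xi\xi')\,bb'$ by definition of $\Phi$. For the left-hand side, observe the containment
\[
(A^!_{m+m'})^* = \bigcap_{i+j=m+m'-2} V^{\otimes i}\otimes R\otimes V^{\otimes j} \subseteq (A^!_m)^*\otimes (A^!_{m'})^*,
\]
obtained by splitting the intersection into the indices $i\leq m-2$ and $i\geq m$. Under this embedding one writes $\alpha$ as $\sum_\ell \alpha^{(1)\ell}\otimes\alpha^{(2)\ell}$ inside $(A^!_m)^*\otimes (A^!_{m'})^*$, and the unique summand of the bar-level $T(1\otimes\alpha\otimes 1)$ that lies in $K(A)_m\otimes_A K(A)_{m'}$ is $\sum_\ell (1\otimes\alpha^{(1)\ell}\otimes 1)\otimes_A(1\otimes\alpha^{(2)\ell}\otimes 1)$; the other splittings pair a degree-$u$ factor with a degree-$(m+m'-u)$ factor for $u\neq m$ and thus contribute zero to a cup product of cochains of bidegrees $(m,m')$. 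Evaluating the cup product on this term yields $\sum_\ell \alpha^{(1)\ell}(\xi)\,\alpha^{(2)\ell}(\xi')\, bb'$.

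The computation then closes once one recognizes that the embedding $(A^!_{m+m'})^*\hookrightarrow (A^!_m)^*\otimes (A^!_{m'})^*$ is precisely the $\Bbbk$-dual of the Koszul-dual multiplication $A^!_m\otimes A^!_{m'}\twoheadrightarrow A^!_{m+m'}$, so that $\sum_\ell \alpha^{(1)\ell}(\xi)\alpha^{(2)\ell}(\xi')=\alpha(\xi\xi')$ and both sides collapse to $\alpha(\xi\xi')\,bb'$. This establishes the DG algebra isomorphism. The second assertion is then immediate: $K(A)$ is an $A^e$-projective resolution of $A$, so $\upH^\bullet(A,A\#H)$ is computed by $\Hom_{A^e}(K(A),A\#H)$, and taking cohomology of the DG algebra isomorphism gives $\upH^\bullet(A,A\#H)\cong \upH^\bullet(A^!\otimes (A\#H))$ as graded algebras.

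The main obstacle I anticipate is clean bookkeeping around the embedding $(A^!_{m+m'})^*\hookrightarrow (A^!_m)^*\otimes (A^!_{m'})^*$ and its identification with the dual of the Koszul-dual multiplication under the convention $(\xi_1\otimes\xi_2)(v_1\otimes v_2)=\xi_1(v_1)\xi_2(v_2)$ adopted in the paper; once that identification is set up the remainder is a direct chase of definitions.
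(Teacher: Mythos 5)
Your proposal is correct and follows essentially the same route as the paper: both evaluate $\Phi((\xi\otimes b)(\xi'\otimes b'))$ and $\Phi(\xi\otimes b)\smallsmile\Phi(\xi'\otimes b')$ on a generator $a\otimes\alpha\otimes a'$ of $K(A)_{m+m'}$, use the restriction of the bar-complex comparison $T$ to write $T(a\otimes\alpha\otimes a')=(a\otimes\alpha_{\langle 1\rangle}\otimes 1)\otimes_A(1\otimes\alpha_{\langle 2\rangle}\otimes a')$, and conclude via $\alpha_{\langle 1\rangle}(\xi)\alpha_{\langle 2\rangle}(\xi')=\alpha(\xi\xi')$. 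Your write-up merely makes explicit two points the paper leaves implicit, namely the containment $(A^!_{m+m'})^*\subseteq(A^!_m)^*\otimes(A^!_{m'})^*$ and its identification with the dual of the multiplication of $A^!$.
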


\begin{proof}
	It is sufficient to prove that $\Phi\colon A^!\otimes (A\#H)\to\Hom_{A^e}(K(A), A\# H)$ is an algebra homomorphism, namely, to show $\Phi((\xi\otimes b)(\xi'\otimes b'))=\Phi(\xi\otimes b)\smallsmile \Phi(\xi'\otimes b')$ for all $\xi\otimes b\in A^!_m\otimes (A\# H)$ and $\xi'\otimes b'\in A^!_{m'}\otimes (A\# H)$.
	
	For any $a\otimes \alpha \otimes a'\in A\otimes (A^!_{m+m'})^*\otimes A$, we have $\Phi((\xi\otimes b)(\xi'\otimes b'))(a\otimes \alpha \otimes a')=\alpha(\xi\xi')abb'a'$. On the other hand, $T(a\otimes \alpha \otimes a')=(a\otimes \alpha_{\langle 1\rangle}\otimes 1)\otimes_A (1\otimes \alpha_{\langle 2\rangle}\otimes a')$, and thus
	\begin{align*}
	\bigl(\Phi(\xi\otimes b)&\smallsmile \Phi(\xi'\otimes b')\bigr) (a\otimes \alpha \otimes a')\\
	&=\Phi(\xi\otimes b)(a\otimes \alpha_{\langle 1\rangle}\otimes 1)\Phi(\xi'\otimes b')(1\otimes \alpha_{\langle 2\rangle}\otimes a') \\
	&=\alpha_{\langle 1\rangle}(\xi)ab \alpha_{\langle 2\rangle}(\xi')b'a' \\
	&= \alpha_{\langle 1\rangle}(\xi)\alpha_{\langle 2\rangle}(\xi') ab b'a' \\
	&=\alpha(\xi\xi')abb'a'.
	\end{align*}
	It follows that $\Phi$ is an isomorphism of graded algebras.
\end{proof}

Since $V$ is a left $H$-module, $V^*$ is a right $H$-module in a natural way, namely, $(\xi\triangleleft h)(v)=\xi(h\triangleright v)$ for all $\xi\in V^*$, $h\in H$ and $v\in V$. It follows that $(V^*)^{\otimes m}$ admits a right $H$-action diagonally, i.e.,
\[
(\xi_1\otimes \xi_2\otimes \cdots\otimes \xi_m)\triangleleft h=\xi_1\triangleleft h_{(1)}\otimes \xi_2\triangleleft h_{(2)}\otimes\cdots\otimes \xi_m\triangleleft h_{(m)}.
\]
It is easy to check that $(R^\perp)\triangleleft h\subseteq R^\perp$ for any $h\in H$, so $A^!$ is a right $H$-module algebra. Observe that $A\# H$ is naturally an $H$-bimodule, and hence $A^!\otimes (A\#H)$ becomes an $H$-$H\otimes H$-bimodule. We define the right $H$-action on $A^!\otimes (A\#H)$ by $(\xi\otimes b)\blacktriangleleft h=\xi\triangleleft h_{(2)}\otimes Sh_{(1)}bh_{(3)}$.

\begin{rem}
    Together with the right $H$-coaction
    \[
    A^!\otimes (A\#H)\to A^!\otimes (A\#H)\otimes H \qquad \xi\otimes ah\mapsto \xi\otimes ah_{(1)}\otimes h_{(2)},
    \]
    $A^!\otimes (A\#H)$ is in fact a Yetter--Drinfeld module.
\end{rem}

\begin{thm}\label{thm:cup-koszul}
	$\Phi\colon A^!\otimes (A\# H) \to\Hom_{A^e}(K(A), A\# H)$ is an isomorphism of complexes of right $H$-modules. Consequently, $\upHH^\bullet(A\# H)$ equipped with the cup product, is isomorphic to the $H$-invariant subalgebra of $\upH^\bullet(A^!\otimes (A\# H))$ as a graded algebra.
\end{thm}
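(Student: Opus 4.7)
The plan is to bootstrap from Theorem~\ref{thm:hochschild-cohomology-1}, which already identifies $\Phi$ as an isomorphism of differential graded algebras, and upgrade it to an isomorphism of complexes of right $H$-modules. Granting equivariance, the rest is formal: since $H$ is semisimple, $(-)^H$ is exact and commutes with cohomology, so taking $H$-invariants on both sides of the DGA isomorphism and invoking Proposition~\ref{prop:koszul-cup-1} (which applies because the Koszul complex $K(A)$ satisfies Hypotheses~I and~II, as explained at the start of Section~\ref{sec:hoch-smash-koszul}) produces the chain of graded-algebra isomorphisms
\[
\upHH^\bullet(A\#H)\;\cong\;\upH^\bullet(A,A\#H)^H\;\cong\;\upH^\bullet\bigl(A^!\otimes(A\#H)\bigr)^H.
\]

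The substantive step is verifying $H$-equivariance of $\Phi$. First I would unpack the right $H$-action on $\Hom_{A^e}(K(A),A\#H)$ supplied by Proposition~\ref{prop:module-structure}. For smash products, $\sum_i l_i(h)\otimes_A r_i(h)=Sh_{(1)}\otimes_A h_{(2)}$, so Lemma~\ref{lem:h-gamma} specialises to
\[
\sum_{i,j}l_i(h_{(1)})\otimes_A r_i(h_{(1)})\otimes l_j(h_{(2)})\otimes_A r_j(h_{(2)})=Sh_{(1)}\otimes_A h_{(2)}\otimes Sh_{(3)}\otimes_A h_{(4)},
\]
and the action reduces to
\[
(f\leftharpoonup h)(a\otimes\alpha\otimes a')=Sh_{(1)}\cdot f\bigl((h_{(2)}\otimes Sh_{(3)})\cdot(a\otimes\alpha\otimes a')\bigr)\cdot h_{(4)},
\]
where the inner dot is the $\Gamma$-action given by \eqref{eq:koszul-gamma-module}. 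Substituting $f=\Phi(\xi\otimes b)$, using the pairing identity $(k\triangleright\alpha)(\xi)=\alpha(\xi\triangleleft k)$ between the $H$-actions on $(A^!_m)^*$ and $A^!_m$, and collapsing the outer $H$-factors via the smash-product relation $k\cdot c=(k_{(1)}\triangleright c)k_{(2)}$ together with $\sum Sk_{(1)}k_{(2)}=\eps(k)$, one arrives at
\[
(\Phi(\xi\otimes b)\leftharpoonup h)(a\otimes\alpha\otimes a')=\alpha(\xi\triangleleft h_{(2)})\,a\,(Sh_{(1)}\,b\,h_{(3)})\,a',
\]
which matches $\Phi((\xi\otimes b)\blacktriangleleft h)(a\otimes\alpha\otimes a')$ directly from the definition of $\blacktriangleleft$.

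The main technical obstacle is the Sweedler-index bookkeeping: the inner $\Gamma$-action requires a further expansion of $h_{(2)}$, so $h$ is refined four to five times, and one must repeatedly invoke coassociativity together with the antipode axiom to see how the outer $Sh_{(1)}$ and $h_{(4)}$ interact with the inner refinement and collapse to the clean three-factor expression $Sh_{(1)}bh_{(3)}$. All other ingredients --- exactness of $(-)^H$ on cohomology, passage from the $H$-equivariant DGA isomorphism to an isomorphism of invariant cohomological subalgebras, and preservation of cup products --- were already established in Sections~\ref{sec:cup-semisimple} and~\ref{sec:hoch-smash-koszul}.
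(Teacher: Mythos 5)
Your proposal is correct and follows essentially the same route as the paper: reduce to $H$-equivariance of $\Phi$ via Theorem~\ref{thm:hochschild-cohomology-1}, specialise $l_i(h)\otimes_A r_i(h)=Sh_{(1)}\otimes_A h_{(2)}$ to unwind $f\leftharpoonup h$ through the $\Gamma$-action \eqref{eq:koszul-gamma-module}, collapse the outer Sweedler factors with the antipode axiom, and match the result against $\blacktriangleleft$ via the pairing $(k\triangleright\alpha)(\xi)=\alpha(\xi\triangleleft k)$. The paper's computation carries out exactly the bookkeeping you describe, arriving at the same expression $\alpha(\xi\triangleleft h_{(2)})\,aSh_{(1)}bh_{(3)}a'$ on both sides.
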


\begin{proof}
	Due to Theorem \ref{thm:hochschild-cohomology-1}, it is sufficient to prove the first assertion.
	
	Recall that the right $H$-module structure on $\Hom_{A^e}(K(A)_m, A\# H)$ is defined in Proposition \ref{prop:module-structure}, and that $l_i(h)\otimes_A r_i(h)=Sh_{(1)}\otimes_A h_{(2)}$. So for any $f\in \Hom_{A^e}(K(A)_m, A\# H)$,
	\begin{align*}
	(f\leftharpoonup h)(a\otimes \alpha\otimes a')&=Sh_{(1)}f(h_{(2)}(a\otimes \alpha\otimes a')Sh_{(3)})h_{(4)} \\
	&=Sh_{(1)}f(h_{(2)}\triangleright a\otimes h_{(3)}\triangleright \alpha\otimes h_{(4)}\triangleright a')h_{(5)} \qquad \text{by \eqref{eq:koszul-gamma-module}} \\
	&=Sh_{(1)}(h_{(2)}\triangleright a)f(1\otimes h_{(3)}\triangleright\alpha\otimes 1)(h_{(4)}\triangleright a')h_{(5)} \\
	&=((Sh_{(2)}h_{(3)})\triangleright a)Sh_{(1)}f(1\otimes h_{(4)}\triangleright\alpha\otimes 1)(h_{(5)}\triangleright a')h_{(6)} \\
	&=aSh_{(1)}f(1\otimes h_{(2)}\triangleright \alpha\otimes 1)h_{(3)}a'.
	\end{align*}
	As a result,
	\begin{align*}
	(\Phi(\xi\otimes b)\leftharpoonup h)(a\otimes \alpha\otimes a')& =aSh_{(1)}\Phi(\xi\otimes b)(1\otimes h_{(2)}\triangleright\alpha\otimes 1)h_{(3)}a' \\
	&=(h_{(2)}\triangleright \alpha)(\xi) aSh_{(1)}bh_{(3)}a'\\
	&=\alpha(\xi\triangleleft h_{(2)})aSh_{(1)}bh_{(3)}a'.
	\end{align*}
	On the other hand,
	\begin{align*}
	\Phi((\xi\otimes b)\blacktriangleleft h)(a\otimes \alpha\otimes a')& =\Phi(\xi\triangleleft h_{(2)}\otimes Sh_{(1)}bh_{(3)})(a\otimes \alpha\otimes a') \\
	&=\alpha(\xi\triangleleft h_{(2)})aSh_{(1)}bh_{(3)}a'.
	\end{align*}
	It follows that $\Phi(\xi\otimes b)\leftharpoonup h=\Phi((\xi\otimes b)\blacktriangleleft h)$, i.e., $\Phi$ preserves the right $H$-action, as desired.
\end{proof}

\begin{rem}
    If $H=\Bbbk$ is the trivial Hopf algebra, then $A\# H\cong A$ and the previous theorem gives rise to an isomorphism $\upHH^\bullet(A)\cong\upH^\bullet(A^!\otimes A)$ of graded algebras, for all Koszul algebras $A$. This was originally proved in \cite{Neg17}.
\end{rem}

\section{Hochschild cohomology of quantum plane extended by Kac--Paljutkin Hopf algebra}\label{sec:kac}

The problem of classification of all Hopf algebras of low dimension was posed by Kaplansky in \cite{Kap75}. For example, classification of all types of Hopf algebras of dimension less than or equal to 11 over an algebraically closed field of characteristic 0 was presented in \cite{Ste99}. By \cite[Theorem 2.13]{Mas95}, there are 8 nonisomorphic classes for finite dimensional semisimple Hopf algebras of dimension 8 over a field of characteristic $\neq$ 2. Among them, there exists only one (up to isomorphism) semisimple and cosemisimple Hopf algebra of dimension 8; it is now famous as the Kac--Paljutkin Hopf algebra \cite{KP66,Mas08}.

Throughout this section, let $A= \Bbbk\langle u,v\rangle/(uv+vu)$ be the quantum $(-1)$-plane, and
$H$ be the Kac--Paljutkin Hopf algebra, which is generated as an algebra by $x$, $y$, $z$ with the relations
\[
x^{2}=y^{2}=1, z^{2}= \frac{1}{2}(1+x+y-xy), yx=xy, zx=yz, zy=xz.
\]
The coalgebra structure and the antipode of $H$ are given by
\begin{gather*}
\Delta(x)=x\otimes x, \Delta(y)=y\otimes y, \varepsilon(x)= \varepsilon(y) =1,\\
\Delta(z) = \frac{1}{2}(1\otimes 1 + 1\otimes x + y\otimes 1 - y\otimes x)(z\otimes z),  \varepsilon(z)=1,\\
S(x) = x, S(y) = y, S(z) = z.
\end{gather*}
There are several different ways that $A$ becomes an $H$-module algebra. We choose the following $H$-action on $A$:
\begin{alignat*}{2}
x\triangleright u&=u, &\quad x\triangleright v&=v, \\
y\triangleright u&=u, &\quad y\triangleright v&=v, \\
z\triangleright u&=q^{-1}v, &\quad z\triangleright v&=qu,
\end{alignat*}
where $q$ is an arbitrary nonzero scalar. The reader can check that under the action $A$ is an $H$-module algebra.

Notice that $H$ is $8$-dimensional with basis $\mathcal{B}=\{1,x,y,z,xy, xz, yz, xyz\}$. Taking the generating relations into account, $H$ is $\mathbb{Z}_2$-graded with $H_0$, $H_1$ spanned by $\{1,x,y,xy\}$, $\{z, xz,yz,  xyz\}$ respectively. Thus $A\# H$ is made into a $\mathbb{Z}_2$-graded algebra too. By a direct computation, the following equations
\begin{gather*}
h_0u=uh_0, h_0v=vh_0, \\
h_1u=q^{-1}vh_1, h_1v=quh_1
\end{gather*}
hold true in $A\# H$ for all $h_0\in H_0$, $h_1\in H_1$.

The Koszul dual of $A$ is $A^!=\Bbbk[u^*, v^*]/(u^{*2}, v^{*2})$. Note that $A^!_m=0$ for all $m\geq 3$, so it suffices to compute the zeroth, the first, and the second cohomological groups.

\subsection{Computation of $\upH^\bullet(A, A\# H)$}

In order to investigate $\upHH^\bullet(A\# H)$ as well as its cup product, let us begin with computing $\upH^\bullet(A, A\# H)$.

Recall that $\upH^\bullet(A, A\# H)$ is the cohomology of
\[
0\xrightarrow{\qquad} A^!_0\otimes(A\# H) \xrightarrow[\qquad]{\partial^0} A^!_1\otimes(A\# H) \xrightarrow[\qquad]{\partial^1} A^!_2\otimes(A\# H) \xrightarrow{\qquad} 0
\]
where the differential of $A^!\otimes (A\# H)$ is given by
\begin{align*}
\partial^0(1\otimes ah)&=u^*\otimes (uah- ahu)+v^*\otimes (vah- ahv), \\
\partial^1(u^*\otimes ah)&=u^*v^*\otimes (vah+ ahv), \\
\partial^1(v^*\otimes ah)&=u^*v^*\otimes (uah+ ahu).
\end{align*}
Obviously, the differentials preserve the $\mathbb{Z}_2$-grading of $A\#H$, and consequently all cohomological groups admit the induced $\mathbb{Z}_2$-grading, namely, $\upH^\bullet(A, A\#H)=\upH^\bullet(A, A\#H)_0\oplus \upH^\bullet(A, A\#H)_1$.

\begin{lem}\label{lem:h0-a}
	One has $\upH^0(A, A\#H)_0=\bigoplus_{i,j} (1\otimes u^{2i}v^{2j})H_0$ and $\upH^0(A, A\#H)_1=0$, where $(1\otimes u^{2i}v^{2j})H_0$ is the subspace of $A^!_0\otimes (A\# H)$ consisting of all $1\otimes u^{2i}v^{2j}h_0$ with $h_0\in H_0$.
\end{lem}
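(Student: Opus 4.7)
The plan is to compute $\upH^0(A, A\# H)$ directly as the kernel of $\partial^0\colon A^!_0\otimes(A\# H)\to A^!_1\otimes(A\# H)$. Since $A^!_0=\Bbbk$, an element of degree $0$ is of the form $1\otimes \omega$ with $\omega\in A\# H$, and the formula for $\partial^0$ shows that it lies in the kernel precisely when $u\omega=\omega u$ and $v\omega=\omega v$. So the whole problem reduces to determining the centralizer of $\{u,v\}$ in $A\# H$. The two relations preserve the $\mathbb{Z}_2$-grading on $A\# H$ inherited from $H=H_0\oplus H_1$, so I can treat the two pieces independently.

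First I would handle the $H_0$-part. Writing $\omega=\sum_{i,j}c_{ij}\,u^iv^j h_0$ with $h_0\in H_0$ fixed in a basis, the equations $h_0 u=uh_0$ and $h_0v=vh_0$ reduce the commutation conditions to $ua=au$ and $va=av$ inside $A$, where $a=\sum c_{ij}u^iv^j$. Using $uv=-vu$, one gets $u(u^iv^j)-(u^iv^j)u=(1-(-1)^j)u^{i+1}v^j$ and similarly for $v$, so the coefficient $c_{ij}$ can be nonzero only when both $i$ and $j$ are even. Running over a basis of $H_0$ gives $\upH^0(A,A\# H)_0=\bigoplus_{i,j}(1\otimes u^{2i}v^{2j})H_0$.

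The main obstacle is the $H_1$-part, where the relations $h_1u=q^{-1}vh_1$, $h_1v=quh_1$ twist $u$ and $v$ against each other. For $\omega=\sum_j a_jh_{1,j}$ with $\{h_{1,j}\}$ a basis of $H_1$ and $a_j\in A$, the conditions become $ua_j=q^{-1}a_jv$ and $va_j=qa_ju$ for each $j$. Writing $a_j=\sum c_{pq}u^pv^q$ and expanding both sides in the PBW basis $\{u^pv^q\}$, I compare coefficients of $u^Pv^Q$: from the first equation the boundary terms $p=0$ and $q=0$ force $c_{0,q}=0$ for all $q$ and $c_{p,0}=0$ for all $p$, and the interior recursion $c_{p,q-1}=q\,c_{p-1,q}$ then propagates the vanishing along diagonals by induction on $p$, yielding $a_j=0$. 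Hence $\upH^0(A,A\# H)_1=0$.

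Putting the two pieces together gives the claimed decomposition. No deep machinery is needed beyond the explicit description of $\partial^0$ and the commutation rules stated just before the lemma; the only subtlety is being careful with the PBW-basis bookkeeping in the $H_1$-part so that the recursion genuinely kills every coefficient.
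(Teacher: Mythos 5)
Your proposal is correct and follows essentially the same route as the paper: identify $\upH^0(A,A\#H)$ with the centralizer of the generators $u,v$ in $A\#H$, split along the $\mathbb{Z}_2$-grading inherited from $H=H_0\oplus H_1$, and compare coefficients in the PBW basis. The only cosmetic difference is in the $H_1$-part, where you eliminate the coefficients via the boundary conditions plus the recursion $c_{p,q-1}=q\,c_{p-1,q}$ by induction on $p$, whereas the paper argues by a lexicographic extremal-term contradiction; these are the same coefficient bookkeeping organized differently.
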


\begin{proof}
	It is well-known that for any $A$-bimodule $N$, $\upH^0(A, N)=\{n\in N\,|\, an=na\text{ for all generators }a \text{ of }A\}$. Thus, $\sum 1\otimes u^iv^jh^{ij}_0\in \upH^0(A, A\#H)_0$ if and only if
	\[
	\sum  uu^iv^jh^{ij}_0=\sum  u^iv^jh^{ij}_0u, \quad \sum  vu^iv^jh^{ij}_0=\sum  u^iv^jh^{ij}_0v,
	\]
	which is equivalent to
	\[
	\sum  u^{i+1}v^jh^{ij}_0=\sum  (-1)^ju^{i+1}v^jh^{ij}_0, \quad	\sum  (-1)^iu^iv^{j+1}h^{ij}_0=\sum  u^iv^{j+1}h^{ij}_0.
	\]
	It follows that $h^{ij}_0=0$ whenever $i$ or $j$ is odd. So
	\[
	\upH^0(A, A\#H)_0=\{1\otimes u^iv^jh^{ij}_0\,|\, i, j \text{ are even}\}=\bigoplus_{i,j}(1\otimes u^{2i}v^{2j})H_0.
	\]
	
	For $\upH^0(A, A\#H)_1$, the argument is similar. Hence $\sum 1\otimes u^iv^jh^{ij}_1\in \upH^0(A, A\#H)_1$ if and only if
	\[
	\sum u^{i+1}v^jh^{ij}_1=\sum q^{-1}u^iv^{j+1}h^{ij}_1, \quad	\sum (-1)^iu^iv^{j+1}h^{ij}_1 =\sum (-1)^jq u^{i+1}v^jh^{ij}_1.
	\]
	Assume that $\upH^0(A, A\#H)_1$ is nontrivial. By taking the lexicographic order on the set of $(i,j)$ with $h^{ij}_1\neq 0$ into account, we can easily deduce a contradiction. Therefore, $\upH^0(A, A\#H)_1=0$, as desired.
\end{proof}

\begin{lem}\label{lem:h1-a}
	$\upH^1(A, A\#H)_0=\bigoplus_{i,j} (u^*\otimes u^{2i+1}v^{2j})H_0\oplus\bigoplus_{i,j} (v^*\otimes u^{2i}v^{2j+1})H_0$, $\upH^1(A, A\#H)_1=\bigoplus_{j} (u^*\otimes v^{2j}-v^*\otimes qv^{2j})H_1$.
\end{lem}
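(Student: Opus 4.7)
The plan is to compute $\upH^1(A, A\#H) = \ker\partial^1/\ima\partial^0$ directly, exploiting the $\mathbb{Z}_2$-grading $H = H_0 \oplus H_1$ that is preserved by both $\partial^0$ and $\partial^1$; this reduces the task to treating $\upH^1(A, A\#H)_0$ and $\upH^1(A, A\#H)_1$ separately.

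For the degree-$0$ part, I would use that $H_0$ commutes with $u$ and $v$, so the formulas collapse to parity factors. For instance, $\partial^1(u^* \otimes u^i v^j h_0) = ((-1)^i + 1)\, u^*v^* \otimes u^i v^{j+1} h_0$, and similarly for the $v^*$-part and for $\partial^0$. A case analysis on the parities of $i$ and $j$ splits $\ker\partial^1 \cap \text{degree }0$ into three classes: ``free'' $u^*$-terms with $i$ odd, ``free'' $v^*$-terms with $j$ odd, and ``paired'' terms of the form $u^* \otimes u^{2k} v^{2l-1}h_0 - v^* \otimes u^{2k-1} v^{2l}h_0$. A parallel computation of $\partial^0(1 \otimes u^i v^j h_0)$ shows that $\ima \partial^0$ in this grading is generated by exactly those cocycles having both indices odd together with the paired terms. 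Taking the quotient eliminates these and leaves precisely the two summands claimed in the lemma.

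For the degree-$1$ part the relations $h_1 u = q^{-1} v h_1$ and $h_1 v = q u h_1$ cause the differentials to mix the coefficients at $u^i v^j$ with those at $u^{i+1} v^{j-1}$ and $u^{i-1} v^{j+1}$, so the computation no longer decouples. I would refine by the weight $n = i+j$ (preserved up to the natural shift by $\partial^1$) and fix a general weight-$n$ element $\sum u^* \otimes u^i v^j f_{ij} + \sum v^* \otimes u^i v^j g_{ij}$ with $f_{ij}, g_{ij} \in H_1$. The constraints at $(I,0)$ and $(0,J)$ force $g_{i,0} = -q f_{i,0}$ and $g_{0,j} = -q f_{0,j}$; substituting into the $n$ interior constraints successively expresses every remaining $g_{ij}$ in terms of the $f_{ij}$'s and leaves one residual relation on the $f$'s. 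A sign bookkeeping argument shows this residual relation is tautological for $n$ odd and nontrivial for $n$ even. A parallel count for $\ima \partial^0$ in each weight then yields $\dim(\ker/\ima) = \dim H_1$ at each even weight and $0$ at odd weights. The explicit element $u^* \otimes v^{2j} - qv^* \otimes v^{2j}$ is manifestly in the kernel (the only nonzero values are $f_{0,2j}$ and $g_{0,2j} = -q f_{0,2j}$), and a short check shows it is not a coboundary (any potential preimage under $\partial^0$ would require nonzero $f_{1,2j-1}$, contradicting the chosen representative).

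The main obstacle is managing the recursive elimination of the $g_{ij}$'s at arbitrary weight $n$ and tracking the cancellations that determine whether the residual relation on the $f_{ij}$'s is tautological or not. A cleaner way around the bookkeeping is to diagonalize the $z$-action on $V$: the vectors $u^* \pm q v^*$ are $z$-eigenvectors with eigenvalues $\pm 1$, so rewriting $\partial^1$ in this basis makes the complex block-triangular along $z$-eigentypes, and the cohomology is concentrated in the single block containing the $z$-fixed monomials $v^{2j}$ (equivalently, the $z$-eigenvalue $-1$ component of $V^*$ paired against even powers of $v$), producing exactly the stated generators.
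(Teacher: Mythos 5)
Your proposal is correct in substance and, for the degree-$0$ part, is essentially the paper's argument: the paper also exploits that $H_0$ commutes with $u,v$ so that $\partial^0$ and $\partial^1$ reduce to parity factors, the only difference being that the paper first normalizes a representative modulo $\Ima\partial^0$ and then imposes the cocycle condition, whereas you compute kernel and image separately; the case analysis (free $u^*$-terms with odd $u$-exponent, free $v^*$-terms with odd $v$-exponent, paired terms, and the image consisting of the odd-odd terms plus the paired terms) is the same. For the degree-$1$ part your route is genuinely different in organization: the paper iterates the relation $u^*\otimes u^iv^jh_1\equiv u^*\otimes v^{i+j}q^{-i}h_1+v^*\otimes(\cdots)\pmod{\Ima\partial^0}$ to reduce every class to a representative $\sum u^*\otimes v^jh_1^{0j}+\sum v^*\otimes u^kv^l\tilde h_1^{kl}$ before applying $\partial^1$, which sidesteps your recursive elimination of the $g_{ij}$ entirely; your weight-by-weight rank count is also viable (it needs $\partial^0$ injective in degree $1$, which the previous lemma $\upH^0(A,A\#H)_1=0$ supplies), but one intermediate claim is stated backwards. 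You assert the residual relation on the $f_{ij}$ is ``tautological for $n$ odd and nontrivial for $n$ even,'' which contradicts both your own dimension count in the following sentence and the final answer: at weight $1$ the interior constraint at $(1,1)$ forces $f_{10}=-qf_{01}$ (a genuine extra condition), while at weight $2$ the constraint at $(2,1)$ reduces to $f_{20}-qf_{11}+qf_{11}-f_{20}=0$; so the residual relation is tautological at \emph{even} weights and nontrivial at odd ones, and only with that parity does $\dim(\Ker\partial^1/\Ima\partial^0)$ equal $\dim H_1$ at even weights and $0$ at odd weights. With this slip corrected (and granting that your $z$-eigenbasis remark $u^*\pm qv^*$ is only a heuristic, not a worked alternative), your argument goes through and yields the stated generators, including the representative $(u^*-qv^*)\otimes v^{2j}$.
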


\begin{proof}
	First of all, let us consider $\upH^1(A, A\#H)_0$. For any cohomological class in it, choose a representative
	\begin{equation}\label{eq:1-cocycle-01}
	\sum u^*\otimes u^iv^jh^{ij}_0+\sum v^*\otimes u^kv^l\tilde{h}^{kl}_0\in Z^1(A, A\#H)_0.
	\end{equation}
	Since
	\begin{equation}\label{eq:partial-0}
	\partial^0(1\otimes u^sv^th_0)=u^*\otimes u^{s+1}v^t(1-(-1)^t)h_0+v^*\otimes u^sv^{t+1}((-1)^s-1)h_0,
	\end{equation}
	we have
	\begin{gather*}
		u^*\otimes u^{i+1}v^{2j+1}h_0\equiv v^*\otimes u^iv^{2j+2}((-1)^i-1)\Bigl(-\frac{1}{2}h_0\Bigr) \pmod{\Ima\partial^0}, \\
	v^*\otimes u^{2k+1}v^{2l+1}h_0\equiv 0 \pmod{\Ima\partial^0}.
	\end{gather*}
	Thus, replacing the representative  if necessary, we may suppress the summands $u^*\otimes u^iv^jh^{ij}_0$ appearing in \eqref{eq:1-cocycle-01} with $i\geq 1$ and odd $j$, as well as $v^*\otimes u^kv^l\tilde{h}^{kl}_0$ with odd $k$, $l$. Hence \eqref{eq:1-cocycle-01} is uniquely rewritten as
	\begin{equation}\label{eq:1-cocycle-02}
	\sum u^*\otimes v^jh^{0j}_0+\sum u^*\otimes u^{i+1}v^{2j}h^{i+1,2j}_0+\sum v^*\otimes u^{2k}v^l\tilde{h}^{2k,l}_0+v^*\otimes u^{2k+1}v^{2l}\tilde{h}^{2k+1,2l}_0.
	\end{equation}
	After applying $\partial^1$ to \eqref{eq:1-cocycle-02}, we obtain
	\begin{align*}
	\sum 2v^{j+1}h^{0j}_0&+\sum((-1)^{i+1}+1)u^{i+1}v^{2j+1}h^{i+1,2j}_0\\
	&{}+\sum (1+(-1)^l)u^{2k+1}v^l\tilde{h}^{2k,l}_0+2u^{2k+2}v^{2l}\tilde{h}^{2k+1,2l}_0=0,
	\end{align*}
	which is simplified to
	\begin{align*}
	\sum 2v^{j+1}h^{0j}_0&+\sum2u^{2i+2}v^{2j+1}h^{2i+2,2j}_0\\
	&{}+\sum 2u^{2k+1}v^{2l}\tilde{h}^{2k,2l}_0+2u^{2k+2}v^{2l}\tilde{h}^{2k+1,2l}_0=0.
	\end{align*}
	It follows that $h^{0j}_0=h^{2i+2,2j}_0=\tilde{h}^{2k,2l}_0=\tilde{h}^{2k+1,2l}_0=0$. Then we drop the trivial summands of \eqref{eq:1-cocycle-02}, yielding
	\[
	\sum u^*\otimes u^{2i+1}v^{2j}h^{2i+1,2j}_0+\sum v^*\otimes u^{2k}v^{2l+1}\tilde{h}^{2k,2l+1}_0.
	\]
	So far, we have proved that each cohomological class in $\upH^1(A, A\#H)_0$ can be represented by a unique $1$-cocycle of the above form. Moreover, this cocycle does not belong to $\Ima\partial^0$, by \eqref{eq:partial-0}. As a result,
	\begin{align*}
	\upH^1(A, A\#H)_0&=\Bigl(\sum u^*\otimes u^{2i+1}v^{2j}h^{2i+1,2j}_0\Bigr)\oplus \Bigl(\sum v^*\otimes u^{2k}v^{2l+1}\tilde{h}^{2k,2l+1}_0\Bigl) \\
	&=\bigoplus_{i,j} (u^*\otimes u^{2i+1}v^{2j})H_0\oplus\bigoplus_{i,j} (v^*\otimes u^{2i}v^{2j+1})H_0.
	\end{align*}
	
	Next, let us compute $\upH^1(A, A\#H)_1$. Since
	\begin{align*}
	\partial^0(1\otimes u^sv^th_1)&=u^*\otimes (uu^{s}v^t h_1-u^sv^th_1u)+v^*\otimes (vu^{s}v^t h_1-u^sv^th_1v) \\
	&=u^*\otimes u^{s+1}v^th_1-u^*\otimes u^{s}v^{t+1}q^{-1}h_1+v^*\otimes (\cdots),
	\end{align*}
	we have
	\[
	u^*\otimes u^{i}v^jh_1\equiv u^*\otimes u^{i-1}v^{j+1}q^{-1}h_1+v^*\otimes (\cdots) \pmod {\Ima\partial^0}
	\]
	whenever $i\geq 1$. Itemizing the formula, we conclude
	\[
	u^*\otimes u^{i}v^jh_1\equiv u^*\otimes v^{i+j}q^{-i}h_1+v^*\otimes (\cdots) \pmod {\Ima\partial^0}.
	\]
	As what we dealt with $\upH^1(A, A\#H)_0$, there is a unique representative of the form
	\begin{equation}\label{eq:1-cocycle-11}
	\sum u^*\otimes v^{j}h^{0j}_1+\sum v^*\otimes u^{k}v^{l}\tilde{h}^{kl}_1 \in Z^1(A, A\#H)_1
	\end{equation}
	for any cohomological class in $\upH^1(A, A\#H)_1$. Applying $\partial^1$ to \eqref{eq:1-cocycle-11}, one obtains
	\[
	\sum \bigl((-1)^{j}quv^{j}+v^{j+1}\bigr)h^{0j}_1+\sum \bigl(u^{k+1}v^l+q^{-1}u^kv^{l+1}\bigr)\tilde{h}^{kl}_1=0.
	\]
	Immediately, we have $\tilde{h}^{kl}_1=0$ for all $k\geq 1$, and then
	\[
	\sum \bigl((-1)^{j}quv^{j}+v^{j+1}\bigr)h^{0j}_1+\sum \bigl(uv^l+q^{-1}v^{l+1}\bigr)\tilde{h}^{0l}_1=0.
	\]
	So for all odd $j$,  $h^{0j}_1=\tilde{h}^{0j}_1=0$, and for all even $j$, $qh^{0j}_1+\tilde{h}^{0j}_1=0$. Hence, the cocycle \eqref{eq:1-cocycle-11}  becomes
	\[
	\sum u^*\otimes v^{2j}h^{0,2j}_1+\sum v^*\otimes v^{2j}(-qh^{0,2j}_1)=\sum(u^*\otimes v^{2j}-v^*\otimes qv^{2j})h^{0,2j}_1.
	\]
	Therefore, $\upH^1(A, A\#H)_1=\bigoplus_{j} (u^*\otimes v^{2j}-v^*\otimes qv^{2j})H_1$.
\end{proof}

According to the proof, a byproduct is that
\begin{equation}\label{eq:byproduct-1}
(u^* \otimes u^{2i}v^{2j}-v^*\otimes qu^{2i}v^{2j})h_1=(u^* \otimes q^{-2i}v^{2i+2j}-v^*\otimes q^{-2i+1}v^{2i+2j})h_1
\end{equation}
holds true in $\upH^1(A, A\#H)_1$. The equation will be useful later on.

\begin{lem}\label{lem:h2-a}
	$\upH^2(A, A\#H)_0=(u^*v^*\otimes 1)H_0\oplus\bigoplus_{i,j} (u^*v^*\otimes u^{2i+1}v^{2j+1})H_0$, and $\upH^2(A, A\#H)_1=(u^*v^*\otimes 1)H_1\oplus \bigoplus_{j} (u^*v^*\otimes v^{2j+1})H_1$.
\end{lem}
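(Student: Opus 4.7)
The plan is to exploit the fact that $A^!_m=0$ for $m\geq 3$, so that $\partial^2=0$, every $2$-cochain is automatically a cocycle, and $\upH^2(A,A\#H)$ is simply the cokernel of $\partial^1$. I will analyze the image of $\partial^1$ on the $\mathbb{Z}_2$-graded pieces separately, exactly as in the proofs of Lemmas \ref{lem:h0-a} and \ref{lem:h1-a}.

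First, using the given formulas for $\partial^1$ together with the commutation rules $vu^iv^j=(-1)^i u^iv^{j+1}$, $u^iv^j u=(-1)^j u^{i+1}v^j$ in $A$, and $h_0u=uh_0$, $h_0v=vh_0$ for $h_0\in H_0$, I will compute
\[
\partial^1(u^*\otimes u^iv^jh_0)=u^*v^*\otimes u^iv^{j+1}(1+(-1)^i)h_0,\quad
\partial^1(v^*\otimes u^iv^jh_0)=u^*v^*\otimes u^{i+1}v^j(1+(-1)^j)h_0.
\]
So the image of $\partial^1$ in the even part is spanned by $u^*v^*\otimes u^av^bh_0$ for $a$ even (with $b\geq 1$) and for $b$ even (with $a\geq 1$). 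A case analysis on the parities of $a,b$ shows that this covers every $(a,b)$ except $(0,0)$ and those with both $a,b$ odd, which will yield the claimed direct sum decomposition of $\upH^2(A,A\#H)_0$.

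For the odd part I will use $h_1u=q^{-1}vh_1$ and $h_1v=quh_1$ for $h_1\in H_1$ to compute
\[
\partial^1(u^*\otimes u^iv^jh_1)=u^*v^*\otimes\bigl((-1)^iu^iv^{j+1}+q(-1)^ju^{i+1}v^j\bigr)h_1,
\]
\[
\partial^1(v^*\otimes u^iv^jh_1)=u^*v^*\otimes\bigl(u^{i+1}v^j+q^{-1}u^iv^{j+1}\bigr)h_1.
\]
Writing $[a,b]$ for the class of $u^*v^*\otimes u^av^bh_1$ in the cokernel, the second relation gives $[a+1,b]=-q^{-1}[a,b+1]$, which by iteration yields $[a,b]=(-q^{-1})^a[0,a+b]$; substituting this into the first relation collapses it to $((-1)^i-(-1)^j)(-q^{-1})^i[0,i+j+1]=0$, so that $[0,2k]=0$ for all $k\geq 1$ (taking $i,j$ of opposite parity), while $[0,0]$ and $[0,2k+1]$ remain free. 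Letting $h_1$ vary over a basis of $H_1$ will then give the claimed decomposition of $\upH^2(A,A\#H)_1$.

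The main obstacle is simply bookkeeping in the odd case: the swap $u\leftrightarrow v$ produced by $h_1$, combined with the sign from $vu=-uv$ and the scalar $q$, makes the coefficients in $\partial^1(u^*\otimes u^iv^jh_1)$ and $\partial^1(v^*\otimes u^iv^jh_1)$ depend on the parities of $i$ and $j$ in opposite ways. The crucial observation that turns this into a clean computation is that the relation coming from $v^*$ is \emph{triangular} in the antidiagonal index $a+b$, so it can be used first to reduce every class to a multiple of $[0,n]$; only then does the $u^*$-relation become a simple parity constraint on $n$.
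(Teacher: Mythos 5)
Your proposal is correct and follows essentially the same route as the paper: since $A^!_3=0$, the group is the cokernel of $\partial^1$, the even part is handled by the identical parity case analysis, and the odd part is reduced to the classes $[0,n]$ via the $v^*$-relations with the $u^*$-relations then forcing $[0,n]=0$ exactly for even $n\geq 2$. The only (cosmetic) difference is that the paper derives the parity constraint by comparing two full reduction chains to $[0,i+j]$, whereas you substitute the $v^*$-reduction into a single $u^*$-relation; both yield the same constraint $\bigl(1-(-1)^{i+j}\bigr)q^{-i}[0,i+j+1]=0$.
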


\begin{proof}
	As in the proof of the previous lemma, we firstly compute $\upH^2(A, A\#H)_0$. Since
	\begin{gather*}
	\partial^1(u^*\otimes u^sv^th_0)=u^*v^*\otimes ((-1)^s+1)u^sv^{t+1}h_0, \\
	\partial^1(v^*\otimes u^sv^th_0)=u^*v^*\otimes (1+(-1)^t)u^{s+1}v^th_0,
	\end{gather*}
	any cohomological class in $\upH^2(A, A\#H)_0$ has a unique representative of the form
	\[
	u^*v^*\otimes h^{00}_0+\sum u^*v^*\otimes u^{2i+1}v^{2j+1}h^{2i+1,2j+1}_0
	\]
	which does not belong to $\Ima\partial^1$. Thus $\upH^2(A, A\#H)_0=(u^*v^*\otimes 1)H_0\oplus\bigoplus_{i,j} (u^*v^*\otimes u^{2i+1}v^{2j+1})H_0$.
	
	Next, let us determine $\upH^2(A, A\#H)_1$ using an analogous manner. Since
	\[
	\partial^1(v^*\otimes u^sv^th_1)=u^*v^*\otimes u^{s+1}v^th_1+u^*v^*\otimes u^sv^{t+1}q^{-1}h_1,
	\]
	we have
	\begin{align*}
	u^*v^*\otimes u^{i}v^jh_1&\equiv u^*v^*\otimes u^{i-1}v^{j+1}(-q^{-1})h_1 \\
	&\equiv u^*v^*\otimes u^{i-2}v^{j+2}q^{-2}h_1 \\
	&\;\;\vdots \\
	&\equiv u^*v^*\otimes v^{i+j}(-1)^iq^{-i}h_1\pmod{\Ima\partial^1}.
	\end{align*}
	On the other hand, it follows from
	\[
	\partial^1(u^*\otimes u^sv^th_1)=u^*v^*\otimes u^sv^{t+1}(-1)^sh_1+u^*v^*\otimes u^{s+1}v^{t}(-1)^tqh_1,
	\]
	that
	\begin{align*}
	u^*v^*\otimes u^{i}v^jh_1&\equiv u^*v^*\otimes u^{i-1}v^{j+1}(-1)^{i+j}q^{-1}h_1 \\
	&\equiv u^*v^*\otimes u^{i-2}v^{j+2}q^{-2}h_1 \\
	&\;\;\vdots \\
	&\equiv u^*v^*\otimes v^{i+j}(-1)^{i(i+j)}q^{-i}h_1 \\
	&\equiv u^*v^*\otimes v^{i+j}(-1)^{i+ij}q^{-i}h_1 \pmod{\Ima\partial^1}.
	\end{align*}
	So as cohomological classes, $u^*v^*\otimes u^{i}v^jh_1=u^*v^*\otimes v^{i+j}(-1)^iq^{-i}h_1$; they represent the trivial class when $ij$ is odd. Equivalently, $u^*v^*\otimes v^jh_1$ is nontrivial if $j$ is not the sum of two odd natural numbers, namely, $j=0$ or $j$ is odd. Therefore, 
	$\upH^2(A, A\#H)_1=(u^*v^*\otimes 1)H_1\oplus \bigoplus_{j} (u^*v^*\otimes v^{2j+1})H_1$.
\end{proof}

We emphasize the equation
\begin{equation}\label{eq:byproduct-2}
u^*v^*\otimes u^{i}v^jh_1=u^*v^*\otimes (-1)^iq^{-i}v^{i+j}h_1\in \upH^2(A, A\#H)_1,
\end{equation}
which is obtained during the proof of Lemma \ref{lem:h2-a}, as a second byproduct in this subsection.

\subsection{Computation of $\upHH^\bullet(A\# H)$}

In this subsection, let us determine the right $H$-action on $\upH^\bullet(A, A\# H)$, and then compute $\upHH^\bullet(A\# H)$ by seeking the $H$-invariants. It is well-known that the $H$-invariants are exactly the images of the endormorphism on $\upH^\bullet(A, A\# H)$ mapping $m$ to $m\blacktriangleleft \int$, where $\int$ is any nonzero integral of $H$ (necessarily, $\eps(\int)\neq 0$).

We choose $\int$ as the average of all elements of $\mathcal{B}$, which equals
\[
\frac{1}{8}(1+x)(1+y)(1+z).
\]
Note that $z^4=(z^2)^2=\frac{1}{4}(1+x+y-xy)^2=1$, so $z$ is invertible with $z^{-1}=z^3$. Thus by the relations of $H$, $z$ is a normal regular element so that there is an automorphism $\tau\colon H\to H$, $a\mapsto \tau(a):=\widehat{a}$ satisfying $za=\widehat{a}z$. Obviously, $\widehat{x}=y$, $\widehat{y}=x$, $\widehat{z}=z$.

By induction on $s$ and $t$,  it is easy to show that $z\triangleright u^{s}=q^{-s}v^{s}$ and $z\triangleright v^{t}=q^{t}u^{t}$. Hence
\begin{align*}
z\triangleright u^{s}v^t&=\frac{1}{2}\bigl((z\triangleright u^{s})(z\triangleright v^{t})+(z\triangleright u^{s})(xz\triangleright v^{t})+(yz\triangleright u^{s})(z\triangleright v^{t}) \\
&\mathrel{\phantom{=}} {}-(yz\triangleright u^{s})(xz\triangleright v^{t})\bigr) \\
&=(z\triangleright u^{s})(z\triangleright v^{t})=q^{-s}v^{s}q^{t}u^{t}=(-1)^{st}q^{t-s}u^tv^s.
\end{align*}
Besides, $x\triangleright u^{s}v^t=y\triangleright u^{s}v^t=u^{s}v^t$. Therefore, in $A\#H$, we have $xu^{s}v^t=u^{s}v^tx$, $yu^{s}v^t=u^{s}v^ty$, and
\begin{align*}
zu^{s}v^t&=\frac{1}{2}\bigl((z\triangleright u^{s}v^t)z+(z\triangleright u^{s}v^t)xz+(yz\triangleright u^{s}v^t)z-(yz\triangleright u^{s}v^t)xz\bigr) \\
&=(z\triangleright u^{s}v^t)z=(-1)^{st}q^{t-s}u^tv^sz.
\end{align*}

The $H$-action on $A^!$ is determined by
\begin{alignat*}{2}
u^*\triangleleft x&=u^*, &\quad v^*\triangleleft x&=v^*, \\
u^*\triangleleft y&=u^*, &\quad v^*\triangleleft y&=v^*, \\
u^*\triangleleft z&=qv^*, &\quad v^*\triangleleft z&=q^{-1}u^*,
\end{alignat*}
thus for any $\xi\in A^!$,
\begin{align*}
(\xi\otimes u^sv^th_0)\blacktriangleleft x&=\xi\triangleleft x\otimes xu^sv^th_0x=\xi\otimes u^sv^txh_0x=\xi\otimes u^sv^th_0, \\
(\xi\otimes u^sv^th_1)\blacktriangleleft x&=\xi\triangleleft x\otimes xu^sv^th_1x=\xi\otimes u^sv^txh_1x=\xi\otimes u^sv^th_1xy, \\
(\xi\otimes u^sv^th_0)\blacktriangleleft y&=\xi\triangleleft y\otimes yu^sv^th_0y=\xi\otimes u^sv^tyh_0y=\xi\otimes u^sv^th_0, \\
(\xi\otimes u^sv^th_1)\blacktriangleleft y&=\xi\triangleleft y\otimes yu^sv^th_1y=\xi\otimes u^sv^tyh_1y=\xi\otimes u^sv^th_1xy,
\end{align*}
namely, $x$ and $y$ act identically on $A^!\otimes (A\#H)_0$, and act on $A^!\otimes (A\#H)_1$ via multiplying by $xy$ from the right side. Furthermore,
\begin{align*}
z_{(1)}\otimes z_{(2)}&\otimes z_{(3)}=\frac{1}{4}(z\otimes z\otimes z+z\otimes xz\otimes z+yz\otimes z\otimes z-yz\otimes xz\otimes z \\
&\mathrel{\phantom{=}} {}+z\otimes z\otimes xz+z\otimes xz\otimes xz+yz\otimes z\otimes xz-yz\otimes xz\otimes xz \\
&\mathrel{\phantom{=}} {}+yz\otimes yz\otimes z+yz\otimes xyz\otimes z+z\otimes yz\otimes z-z\otimes xyz\otimes z \\
&\mathrel{\phantom{=}} {}-yz\otimes yz\otimes xz-yz\otimes xyz\otimes xz-z\otimes yz\otimes xz+z\otimes xyz\otimes xz).
\end{align*}
Since each $z_{(2)}$ tensor summand satisfies $\xi\triangleleft z_{(2)}=\xi\triangleleft z$, one has
\begin{align*}
(\xi\otimes u^sv^th_0)\blacktriangleleft z&=\xi\triangleleft z\otimes \frac{1}{2}(zu^sv^th_0z+zu^sv^th_0xz+zyu^sv^th_0z-zyu^sv^th_0xz) \\
&=\xi\triangleleft z\otimes \frac{1}{2}(zu^sv^th_0z+zu^sv^th_0xz+zu^sv^tyh_0z-zu^sv^tyh_0xz) \\
&=\xi\triangleleft z\otimes \frac{1}{2}zu^sv^th_0(1+x+y-xy)z \\
&=\xi\triangleleft z\otimes(-1)^{st}q^{t-s}u^tv^szh_0z^{-1} \\
&=\xi\triangleleft z\otimes(-1)^{st}q^{t-s}u^tv^s\widehat{h_0}, \\
(\xi\otimes u^sv^th_1)\blacktriangleleft z&=\xi\triangleleft z\otimes \frac{1}{2}(zu^sv^th_1z+zu^sv^th_1xz+zyu^sv^th_1z-zyu^sv^th_1xz) \\
&=\xi\triangleleft z\otimes \frac{1}{2}(zu^sv^th_1z+zu^sv^th_1xz+zu^sv^tyh_1z-zu^sv^tyh_1xz) \\
&=\xi\triangleleft z\otimes \frac{1}{2}zu^sv^t(h_1+h_1x+h_1x-h_1)z \\
&=\xi\triangleleft z\otimes (-1)^{st}q^{t-s}u^tv^szh_1xz \\
&=\xi\triangleleft z\otimes (-1)^{st}q^{t-s}u^tv^s\widehat{h_1}yz^2.
\end{align*}

By using the foregoing formulas, let us begin to compute $\upH^\bullet(A, A\#H)\blacktriangleleft\int$.

\begin{lem}\label{lem:invariant}
	For any $\xi\in A^!$, $h_0\in H_0$ and $h_1\in H_1$, one has
	\begin{align*}
	(\xi\otimes u^sv^th_0)\blacktriangleleft \tint&=\bigl(\xi\otimes u^sv^th_0+\xi\triangleleft z\otimes (-1)^{st}q^{t-s}u^tv^s\widehat{h_0}\bigr)\frac{1}{2}, \\
	(\xi\otimes u^sv^th_1)\blacktriangleleft \tint&=\bigl(\xi\otimes u^sv^th_1+\xi\triangleleft z\otimes (-1)^{st}q^{t-s}u^tv^s\widehat{h_1}\bigr)\frac{1+xy}{4}.
	\end{align*}
\end{lem}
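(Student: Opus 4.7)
The plan is to exploit the factorization $\tint = \frac{1}{8}(1+x+y+xy)(1+z)$ together with the right action axiom $m\blacktriangleleft(ab) = (m\blacktriangleleft a)\blacktriangleleft b$, which lets us compute in two stages.

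In the first stage we evaluate $m\blacktriangleleft(1+x+y+xy)$ using the formulas for $\blacktriangleleft x$ and $\blacktriangleleft y$ derived immediately before the lemma. For $m=\xi\otimes u^sv^th_0$ with $h_0\in H_0$ the actions $\blacktriangleleft x, \blacktriangleleft y, \blacktriangleleft xy$ are all the identity, yielding $4m$. For $m=\xi\otimes u^sv^th_1$ both $\blacktriangleleft x$ and $\blacktriangleleft y$ act as right multiplication of the $A\#H$-slot by $xy$, while the composite $\blacktriangleleft xy = (\blacktriangleleft x)(\blacktriangleleft y)$ multiplies by $(xy)^2 = 1$ and is therefore the identity. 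This produces $2\xi\otimes u^sv^th_1(1+xy)$.

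In the second stage we apply $\blacktriangleleft(1+z)$ to these outputs. The identity term simply reproduces them. For the $z$-part in the $H_0$-case we obtain $4$ copies of $m\blacktriangleleft z=\xi\triangleleft z\otimes(-1)^{st}q^{t-s}u^tv^s\widehat{h_0}$, and combining and dividing by $8$ gives the first identity of the lemma. For the $z$-part in the $H_1$-case we apply $\blacktriangleleft z$ to $2(\xi\otimes u^sv^th_1+\xi\otimes u^sv^th_1\,xy)$; using $\widehat{h_1\,xy}=\widehat{h_1}\,xy$ together with $xy\cdot y = x$, the result equals $2\xi\triangleleft z\otimes(-1)^{st}q^{t-s}u^tv^s\widehat{h_1}(x+y)z^2$.

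The main obstacle is to reduce the residual element $(x+y)z^2$ to $1+xy$, so as to match the factor $\frac{1+xy}{4}$ in the statement. Substituting the defining relation $z^2=\frac{1}{2}(1+x+y-xy)$ and expanding with $x^2=y^2=1$ gives $(x+y)(1+x+y-xy) = (x+y)+(1+xy)+(xy+1)-(y+x) = 2(1+xy)$, so $(x+y)z^2 = 1+xy$. Once this identity is in hand, collecting terms and dividing the total by $8$ yields the second formula of the lemma.
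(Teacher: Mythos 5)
Your proof is correct and follows essentially the same route as the paper's: both factor $\tint=\frac{1}{8}(1+x)(1+y)(1+z)$, use that $x,y$ act trivially on the degree-$0$ part and by right multiplication by $xy$ on the degree-$1$ part, and then reduce the residual element via $(x+y)z^2=1+xy$ (the paper writes this as $\frac{1+xy}{4}yz^2=\frac{1+xy}{4}$). No gaps.
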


\begin{proof}
	Since $x$, $y$ act identically on $A^!\otimes (A\#H)_0$, it is routine to show
    \[
    (\xi\otimes u^sv^th_0)\blacktriangleleft \int=(\xi\otimes u^sv^th_0)\blacktriangleleft \frac{1+z}{2}.
    \]
    So the first assertion is true.
	
	For the second, we have
	\[
	\biggl((\xi\otimes u^sv^th_1)\blacktriangleleft\frac{1+x}{2}\biggr)\blacktriangleleft\frac{1+y}{2} =\xi\otimes u^sv^th_1 \biggl(\frac{1+xy}{2}\biggr)^2=\xi\otimes u^sv^th_1 \frac{1+xy}{2},
	\]
	so that
	\begin{align*}
	(\xi\otimes u^sv^th_1)\blacktriangleleft \tint&=\biggl(\xi\otimes u^sv^th_1 \frac{1+xy}{2}\biggr)\blacktriangleleft\frac{1+z}{2} \\
	&=\xi\otimes u^sv^th_1 \frac{1+xy}{4}+\xi\triangleleft z\otimes (-1)^{st}q^{t-s}u^tv^s\widehat{h_1}\frac{1+xy}{4}yz^2.
	\end{align*}
	The second assertion follows from
	\[
	\frac{1+xy}{4}yz^2=\frac{x+y}{4}z^2=\frac{(x+y)(1+x+y-xy)}{8}=\frac{1+xy}{4}. \qedhere
	\]
\end{proof}

\begin{thm}\label{thm:hoch-cohomology-smash}
	The Hochschild cohomological groups of $A\#H$, as vector spaces, admit bases as follows:
 \begin{alignat*}{2}
	\upHH^0(A\#H): \;
       &(1\otimes q^{2i}u^{2i}v^{2j}+1\otimes q^{2j}u^{2j}v^{2i})\frac{1+xy}{2}, &  &i\leq j, \\
       & (1\otimes q^{2i}u^{2i}v^{2j}+1\otimes q^{2j}u^{2j}v^{2i})\frac{1-xy}{2}, &  &i\leq j, \\
       &(1\otimes q^{2i}u^{2i}v^{2j}+1\otimes q^{2j}u^{2j}v^{2i})\frac{x+y}{2}, &  &i\leq j, \\
       & (1\otimes q^{2i}u^{2i}v^{2j}-1\otimes q^{2j}u^{2j}v^{2i})\frac{x-y}{2}, &  &i<j; \\
	\upHH^1(A\#H): \;
     &(u^*\otimes q^{2i}u^{2i+1}v^{2j}+v^*\otimes q^{2j}u^{2j}v^{2i+1})\frac{1+xy}{2}, &  &i, j\geq 0, \\
	&(u^*\otimes q^{2i}u^{2i+1}v^{2j}+v^*\otimes q^{2j}u^{2j}v^{2i+1})\frac{1-xy}{2}, &  &i, j\geq 0, \\
	&(u^*\otimes q^{2i}u^{2i+1}v^{2j}+v^*\otimes q^{2j}u^{2j}v^{2i+1})\frac{x+y}{2}, &  &i, j\geq 0, \\
	 &(u^*\otimes q^{2i}u^{2i+1}v^{2j}-v^*\otimes q^{2j}u^{2j}v^{2i+1})\frac{x-y}{2}, &  &i, j\geq 0; \\
	\upHH^2(A\#H): \;
    &(u^*v^*\otimes q^{2i}u^{2i+1}v^{2j+1}-u^*v^*\otimes q^{2j}u^{2j+1}v^{2i+1})\frac{1+xy}{2}, &  &i< j, \\
	&(u^*v^*\otimes q^{2i}u^{2i+1}v^{2j+1}-u^*v^*\otimes q^{2j}u^{2j+1}v^{2i+1})\frac{1-xy}{2}, &  &i< j, \\
	 &(u^*v^*\otimes q^{2i}u^{2i+1}v^{2j+1}-u^*v^*\otimes q^{2j}u^{2j+1}v^{2i+1})\frac{x+y}{2}, &  &i< j, \\
	&(u^*v^*\otimes q^{2i}u^{2i+1}v^{2j+1}+u^*v^*\otimes q^{2j}u^{2j+1}v^{2i+1})\frac{x-y}{2}, &  &i\leq j, \\
	& u^*v^*\otimes k, \;\; k\in\biggl\{\frac{1+xy}{2}, \frac{1-xy}{2}, \frac{x+y}{2}, \frac{z+xyz}{2}, \frac{xz+yz}{2}\biggr\}.
	\end{alignat*}
\end{thm}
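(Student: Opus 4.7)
The plan is to combine Theorem \ref{thm:cup-koszul} (which identifies $\upHH^\bullet(A\#H)$ with the $H$-invariant subspace of $\upH^\bullet(A, A\#H)$) with the explicit bases of $\upH^\bullet(A, A\#H)$ computed in Lemmas \ref{lem:h0-a}, \ref{lem:h1-a} and \ref{lem:h2-a}, together with the $\blacktriangleleft$-formulas of Lemma \ref{lem:invariant}. Since $H$ is semisimple and $\eps(\tint)=1$, the $H$-invariant subspace is exactly the image of the idempotent $(-)\blacktriangleleft\tint$, so it suffices to apply this operator to each basis vector from Lemmas \ref{lem:h0-a}--\ref{lem:h2-a} and extract a basis of the image.

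The first step is to choose a basis of $H_0$ adapted to the automorphism $\tau\colon H\to H$, $h_0\mapsto\widehat{h_0}$ that swaps $x$ and $y$: namely the eigenvectors $\tfrac{1+xy}{2},\tfrac{1-xy}{2},\tfrac{x+y}{2}$ for eigenvalue $+1$ and $\tfrac{x-y}{2}$ for eigenvalue $-1$. Plugging the degree-$0$ generators $1\otimes u^{2i}v^{2j}h_0$ from Lemma \ref{lem:h0-a} into the first formula of Lemma \ref{lem:invariant} produces a symmetrization $1\otimes u^{2i}v^{2j}h_0 + 1\otimes q^{2(j-i)}u^{2j}v^{2i}\widehat{h_0}$, which after rescaling gives the symmetric pairs $(1\otimes q^{2i}u^{2i}v^{2j}+1\otimes q^{2j}u^{2j}v^{2i})h_0$ (nontrivial for $i\leq j$) when $\widehat{h_0}=h_0$, and the antisymmetric pairs (nontrivial only for $i<j$) when $\widehat{h_0}=-h_0$. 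This is exactly the list given for $\upHH^0(A\#H)$. The same symmetrization mechanism, together with the identities $u^*\triangleleft z=qv^*$ and $v^*\triangleleft z=q^{-1}u^*$, transforms the two families in Lemma \ref{lem:h1-a} into a single family of pairs $(u^*\otimes q^{2i}u^{2i+1}v^{2j}+v^*\otimes q^{2j}u^{2j}v^{2i+1})h_0$ indexed by all $i,j\geq 0$ (no range collapse, since the two generators map to proportional invariants); the antisymmetric and symmetric variants arise from the $\tau$-eigenbasis of $H_0$ in exactly the same way as in degree $0$.

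For degree $2$ the analysis of the $\upH^2(A,A\#H)_0$-part is parallel: applying $(-)\blacktriangleleft\tint$ to $u^*v^*\otimes u^{2i+1}v^{2j+1}h_0$ yields $(u^*v^*\otimes q^{2i}u^{2i+1}v^{2j+1} \mp u^*v^*\otimes q^{2j}u^{2j+1}v^{2i+1})h_0$, where the sign is opposite to $\widehat{h_0}/h_0$; the vanishing-diagonal pattern then forces the indicated ranges $i<j$ and $i\leq j$. The ``free'' classes $u^*v^*\otimes h_0$ give one invariant per $\tau$-fixed basis vector of $H_0$, producing the three elements $u^*v^*\otimes\tfrac{1+xy}{2},\;u^*v^*\otimes\tfrac{1-xy}{2},\;u^*v^*\otimes\tfrac{x+y}{2}$. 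The main subtlety is the $\upH^2(A,A\#H)_1$-part: one must combine the second formula in Lemma \ref{lem:invariant} (which contains the projector $\tfrac{1+xy}{4}$) with the reduction \eqref{eq:byproduct-2} to rewrite every $u^*v^*\otimes u^iv^jh_1$ in the form $u^*v^*\otimes v^{i+j}h_1$. A direct check then shows that $u^*v^*\otimes v^{2j+1}h_1$ becomes proportional to $u^*v^*\otimes v^{2j+1}(h_1-\widehat{h_1})\tfrac{1+xy}{4}$, which vanishes on all four basis vectors of $H_1$; only the classes $u^*v^*\otimes h_1$ survive, and on the $H_1$-basis $\{z,xz,yz,xyz\}$ the symmetrization $(h_1+\widehat{h_1})\tfrac{1+xy}{4}$ collapses (using $zxy=xyz$) to the two elements $\tfrac{z+xyz}{2}$ and $\tfrac{xz+yz}{2}$.

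The main obstacle is bookkeeping rather than any conceptual leap: one must carefully propagate the scalars $(-1)^{st}q^{t-s}$ and $\widehat{h}$ through each symmetrization, use \eqref{eq:byproduct-1}--\eqref{eq:byproduct-2} to normalize representatives before the integral is applied, and then verify linear independence by noting that the listed vectors have pairwise distinct leading monomials under the lexicographic order on $(i,j,\text{component of }H)$. Spanning is automatic because we have applied $(-)\blacktriangleleft\tint$ to the entire basis of $\upH^\bullet(A,A\#H)$ and kept all nonzero outputs.
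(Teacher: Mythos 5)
Your plan follows the paper's proof essentially verbatim: both apply the idempotent $(-)\blacktriangleleft\tint$ of Lemma \ref{lem:invariant} to the bases of Lemmas \ref{lem:h0-a}--\ref{lem:h2-a}, organized by the $\tau$-eigenbasis of $H_0$ and $H_1$, and both use \eqref{eq:byproduct-1}--\eqref{eq:byproduct-2} to normalize the odd-degree components before taking invariants. The only point you leave implicit is that the family $(u^*\otimes v^{2j}-v^*\otimes qv^{2j})H_1$ in $\upH^1(A,A\#H)_1$ is annihilated by the integral, but this follows from \eqref{eq:byproduct-1} by exactly the computation you describe for the $\upH^2(A,A\#H)_1$-part, so the plan is sound.
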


\begin{proof}
	Recall that the automorphism $\tau\colon H\to H$, $a\mapsto \widehat{a}$ has two eigenvalues, $1$ and $-1$, and the corresponding eigenspaces are spanned by bases $\{1\pm xy, x+y, z\pm xyz, xz+yz\}$ and $\{x-y,xz-yz\}$ respectively. In order to prove the theorem, it is sufficient to apply Lemmas \ref{lem:h0-a}, \ref{lem:h1-a}, \ref{lem:h2-a} and \ref{lem:invariant}.
	
	Since the proof is completely computational, we only verify the basis of $\upHH^2(A\#H)$ here. The situations for $\upHH^0(A\#H)$ and $\upHH^1(A\#H)$ are left to the reader.
	
	A direct computation yields
	\begin{align*}
	u^*v^*\triangleleft z&=\frac{1}{2}\bigl((u^*\triangleleft z)(v^*\triangleleft z)+(u^*\triangleleft z)(v^*\triangleleft xz)+(u^*\triangleleft yz)(v^*\triangleleft z) \\
	&\mathrel{\phantom{=}} {}-(u^*\triangleleft yz)(v^*\triangleleft xz)\bigr) \\
	&=(u^*\triangleleft z)(v^*\triangleleft z)=qv^*q^{-1}u^*=u^*v^*,
	\end{align*}
	so by Lemma \ref{lem:invariant}, for any $h_0\in H_0$,
	\[
	(u^*v^*\otimes u^{2i+1}v^{2j+1}h_0)\blacktriangleleft \tint=u^*v^*\otimes (u^{2i+1}v^{2j+1}h_0- q^{2j-2i}u^{2j+1}v^{2i+1}\widehat{h_0})\dfrac{1}{2}.
	\]
	Now let $h_0$ be a basis element of $H_0$. When $h_0$ is one of $1+xy$, $1-xy$, $x+y$, we have $\widehat{h_0}=h_0$; when $h_0$ is $x-y$,  $\widehat{h_0}=-h_0$. Hence, up to scalars, we obtain a part of the basis of $\upHH^2(A\#H)$ as desired.
	
	Also, by \eqref{eq:byproduct-2}, we have
	\begin{align*}
	(u^*v^*\otimes v^{2j+1}h_1)\blacktriangleleft \tint 	&=(u^*v^*\otimes v^{2j+1}h_1+u^*v^*\otimes q^{2j+1}u^{2j+1} \widehat{h_1})\frac{1+xy}{4} \\
	&=(u^*v^*\otimes v^{2j+1}h_1-u^*v^*\otimes v^{2j+1} \widehat{h_1})\frac{1+xy}{4},
	\end{align*}
	which is identically zero for all $h_1$ corresponding to the eigenvalue $1$. On the other hand, when  $h_1=xz-yz$,   since
	\[
	(xz-yz)\frac{1+xy}{4}=\frac{(x-y)(1+xy)}{4}z=0,
	\]
	we conclude that no new base element is produced from $(u^*v^*\otimes v^{2j+1})H_1$.
	
	Finally, we have to consider $(u^*v^*\otimes k)\blacktriangleleft\int$ with $k\in H$. This is very easy, and we find the five exceptional basis elements as listed in the theorem.
\end{proof}

\subsection{Multiplication table of $\upHH^\bullet(A\#H)$}

In order to give the cup product on $\upHH^\bullet(A\#H)$, we denote the bases of $\upHH^\bullet(A\#H)$ listed in Theorem \ref{thm:hoch-cohomology-smash} by
$\varepsilon_{1}^{i,j}$, $\varepsilon_{2}^{i,j}$, $\varepsilon_{3}^{i,j}$, $\varepsilon_{4}^{i,j}$, $\eta_{1}^{i,j}$, $\eta_{2}^{i,j}$, $\eta_{3}^{i,j}$, $\eta_{4}^{i,j}$, $\omega_{1}^{i,j}$, $\omega_{2}^{i,j}$, $\omega_{3}^{i,j}$, $\omega_{4}^{i,j}$, $\omega'_{1}$, $\omega'_{2}$, $\omega'_{3}$, $\omega''_{1}$, $\omega''_{3}$ successively. For convenience, we extend the notations as follows: for $i<j$,
$\varepsilon_{1}^{j,i}\triangleq\varepsilon_{1}^{i,j}$, $\varepsilon_{2}^{j,i}\triangleq\varepsilon_{2}^{i,j}$, $\varepsilon_{3}^{j,i}\triangleq\varepsilon_{3}^{i,j}$, $\varepsilon_{4}^{j,i}\triangleq-\varepsilon_{4}^{i,j}$,
$\omega_{1}^{j,i}\triangleq-\omega_{1}^{i,j}$, $\omega_{2}^{j,i}\triangleq-\omega_{2}^{i,j}$, $\omega_{3}^{j,i}\triangleq-\omega_{3}^{i,j}$, $\omega_{4}^{j,i}\triangleq\omega_{4}^{i,j}$, that is, all the new notations are symmetric with respect to $i$ and $j$, except that $\varepsilon_{4}^{i,j}$, $\omega_{1}^{i,j}$,
$\omega_{2}^{i,j}$
and $\omega_{3}^{i,j}$ are anti-symmetric.

Notice that 
the cup product $\smallsmile$ on $\upHH^\bullet(A\#H)$ is induced by the multiplication on $A^!\otimes (A\#H)$. We list the operation in Tables~\ref{cup-product1}-\ref{cup-product4} as follows. We remind the reader that $(\upHH^\bullet(A\#H),\smallsmile)$ is a graded commutative algebra, so the four tables are enough.

\begin{table}[htbp]
\centering\caption{\label{cup-product1}Cup product between $\upHH^0(A\#H)$ and $\upHH^0(A\#H)$}
\scalebox{0.9}{
\begin{tabular}{|p{0.8cm}<{\centering}|p{2.8cm}<{\centering}|p{2.8cm}<{\centering}|p{2.8cm}<{\centering}|p{2.8cm}<{\centering}|}
\hline
 $\smallsmile$ & $\varepsilon_{1}^{s,t}$ & $\varepsilon_{2}^{s,t}$ & $\varepsilon_{3}^{s,t}$ & $\varepsilon_{4}^{s,t}$ \\
 \hline
 $\varepsilon_{1}^{i,j}$ & $\varepsilon_{1}^{i+s,j+t}+\varepsilon_{1}^{i+t,j+s}$ & $0$ & $\varepsilon_{3}^{i+s,j+t}+\varepsilon_{3}^{i+t,j+s}$ & $0$
\\
\hline
$\varepsilon_{2}^{i,j}$ & $0$ & $\varepsilon_{2}^{i+s,j+t}+\varepsilon_{2}^{i+t,j+s}$   & $0$  &  $\varepsilon_{4}^{i+s,j+t}+\varepsilon_{4}^{i+t,j+s}$
\\
\hline
 $\varepsilon_{3}^{i,j}$ & $\varepsilon_{3}^{i+s,j+t}+\varepsilon_{3}^{i+t,j+s}$ & $0$ & $\varepsilon_{1}^{i+s,j+t}+\varepsilon_{1}^{i+t,j+s}$ & $0$
\\
\hline
 $\varepsilon_{4}^{i,j}$ & $0$  &$\varepsilon_{4}^{i+s,j+t}+\varepsilon_{4}^{i+t,j+s}$ & $0$  &$\varepsilon_{2}^{i+s,j+t}+\varepsilon_{2}^{i+t,j+s}$
 \\
  \hline
\end{tabular}}
\end{table}
\vskip 2pt

\begin{table}[htbp]
\centering\caption{\label{cup-product2}Cup product between $\upHH^1(A\#H)$ and $\upHH^0(A\#H)$}
\scalebox{0.9}{
\begin{tabular}{|p{0.8cm}<{\centering}|p{2.8cm}<{\centering}|p{2.8cm}<{\centering}|p{2.8cm}<{\centering}|p{2.8cm}<{\centering}|}
\hline
 $\smallsmile$ & $\varepsilon_{1}^{s,t}$ & $\varepsilon_{2}^{s,t}$ & $\varepsilon_{3}^{s,t}$ & $\varepsilon_{4}^{s,t}$\\
 \hline
 $\eta_{1}^{i,j}$ & $\eta_{1}^{i+s,j+t}+\eta_{1}^{i+t,j+s}$ & $0$ & $\eta_{3}^{i+s,j+t}+\eta_{3}^{i+t,j+s}$ & $0$
\\
\hline
$\eta_{2}^{i,j}$ & $0$ & $\eta_{2}^{i+s,j+t}+\eta_{2}^{i+t,j+s}$   & $0$  &  $\eta_{4}^{i+s,j+t}+\eta_{4}^{i+t,j+s}$
\\
\hline
 $\eta_{3}^{i,j}$ & $\eta_{3}^{i+s,j+t}+\eta_{3}^{i+t,j+s}$ & $0$ & $\eta_{1}^{i+s,j+t}+\eta_{1}^{i+t,j+s}$ & $0$
\\
\hline
 $\eta_{4}^{i,j}$ & $0$  &$\eta_{4}^{i+s,j+t}+\eta_{4}^{i+t,j+s}$ & $0$  &$\eta_{2}^{i+s,j+t}+\eta_{2}^{i+t,j+s}$
\\
  \hline
\end{tabular}}
\end{table}
\vskip 2pt

\begin{table}[htbp]
\centering\caption{\label{cup-product3}Cup product between $\upHH^2(A\#H)$ and $\upHH^0(A\#H)$}
\scalebox{0.9}{
\begin{tabular}{|p{0.8cm}<{\centering}|p{2.8cm}<{\centering}|p{2.8cm}<{\centering}|p{2.8cm}<{\centering}|p{2.8cm}<{\centering}|}
\hline
$\smallsmile$ & $\varepsilon_{1}^{s,t}$ & $\varepsilon_{2}^{s,t}$ & $\varepsilon_{3}^{s,t}$ & $\varepsilon_{4}^{s,t}$ \\
 \hline
 $\omega_{1}^{i,j}$ & $\omega_{1}^{i+s,j+t}+\omega_{1}^{i+t,j+s}$ & $0$ & $\omega_{3}^{i+s,j+t}+\omega_{3}^{i+t,j+s}$ & $0$
\\
\hline
$\omega_{2}^{i,j}$ & $0$ & $\omega_{2}^{i+s,j+t}+\omega_{2}^{i+t,j+s}$   & $0$  &  $\omega_{4}^{i+s,j+t}-\omega_{4}^{i+t,j+s}$
\\
\hline
 $\omega_{3}^{i,j}$ & $\omega_{3}^{i+s,j+t}+\omega_{3}^{i+t,j+s}$ & $0$ & $\omega_{1}^{i+s,j+t}+\omega_{1}^{i+t,j+s}$ & $0$
\\
\hline
 $\omega_{4}^{i,j}$ & $0$  &$\omega_{4}^{i+s,j+t}+\omega_{4}^{i+t,j+s}$ & $0$  &$\omega_{2}^{i+s,j+t}-\omega_{2}^{i+t,j+s}$
\\
  \hline
   $\omega'_{1}$ & $2\delta_{s+t}^0\omega'_{1}$  & $0$ & $2\delta_{s+t}^0\omega'_{3}$ & $0$
\\
  \hline
   $\omega'_{2}$ & $0$  &$2\delta_{s+t}^0\omega'_{2}$ & $0$  &$0$
\\
  \hline
   $\omega'_{3}$ & $2\delta_{s+t}^0\omega'_{3}$  & $0$   &$2\delta_{s+t}^0\omega'_{1}$ &$0$
\\
  \hline
   $\omega''_{1}$  &$2\delta_{s+t}^0\omega''_{1}$ & $0$  &$2\delta_{s+t}^0\omega''_{3}$ & $0$
\\
  \hline
   $\omega''_{3}$ & $2\delta_{s+t}^0\omega''_{3}$  &$0$ &$2\delta_{s+t}^0\omega''_{1}$ & $0$
\\
  \hline
\end{tabular}}
\end{table}

\begin{table}[htbp]
\centering\caption{\label{cup-product4}Cup product between $\upHH^1(A\#H)$ and $\upHH^1(A\#H)$}
\scalebox{0.9}{
\begin{tabular}{|p{0.8cm}<{\centering}|p{2.8cm}<{\centering}|p{2.8cm}<{\centering}|p{2.8cm}<{\centering}|p{2.8cm}<{\centering}|}
\hline
 $\smallsmile$ &$\eta_{1}^{s,t}$ & $\eta_{2}^{s,t}$ & $\eta_{3}^{s,t}$ & $\eta_{4}^{s,t}$\\
 \hline
 $\eta_{1}^{i,j}$ & $\omega_{1}^{i+t,j+s}$ & $0$ & $\omega_{3}^{i+t,j+s}$ & $0$
\\
\hline
$\eta_{2}^{i,j}$ & $0$ & $\omega_{2}^{i+t,j+s}$   & $0$  &  $-\omega_{4}^{i+t,j+s}$
\\
\hline
 $\eta_{3}^{i,j}$ & $\omega_{3}^{i+t,j+s}$ & $0$ & $\omega_{1}^{i+t,j+s}$ & $0$
\\
\hline
 $\eta_{4}^{i,j}$ & $0$  &$\omega_{4}^{i+t,j+s}$ & $0$  &$-\omega_{2}^{i+t,j+s}$
\\
  \hline
\end{tabular}}
\end{table}

Before ending the section, we mention that the four tables are obtained by direct computation. Observe that the basis elements we chosen are of the form $\sum_i \xi_i\otimes a_ih_i$ with $h_i\in H_0$ except $\omega''_1$, $\omega''_3$, and that $H_0$ is spanned by $1$, $x$, $y$, $xy$, which are all group-like and act on $A$ trivially. So the cup product is not hard to compute, and hence we omit all the details.

\vskip 10pt

\noindent {\bf Acknowledgments.}\quad
This work is supported by the National Natural Science Foundation of China (Nos. 11971418, 11871125 and 11771085). The authors are grateful to Qi Lou, Shuanhong Wang, Guodong Zhou, and Ruipeng Zhu for helpful conversations.

\end{document}